\newcommand{\bba}{\mathbb{A}}\newcommand{\bbq}{\mathbb{Q}}\newcommand{\bbp}{\mathbb{P}}\newcommand{\bbr}{\mathbb{R}}\newcommand{\bbz}{\mathbb{Z}}
\newcommand{\caa}{\mathcal{A}}\newcommand{\cac}{\mathcal{C}}\newcommand{\cad}{\mathcal{D}}\newcommand{\cah}{\mathcal{H}}\newcommand{\caj}{\mathcal{J}}\newcommand{\call}{\mathcal{L}}\newcommand{\can}{\mathcal{N}}\newcommand{\cau}{\mathcal{U}}\newcommand{\cav}{\mathcal{V}}\newcommand{\cax}{\mathcal{X}}
\newcommand{\frS}{\mathfrak{S}}
\DeclareMathOperator{\cl}{cl}  \DeclareMathOperator{\gal}{Gal}
\DeclareMathOperator{\Ker}{Ker}\DeclareMathOperator{\ns}{NS}\DeclareMathOperator{\pic}{Pic}\DeclareMathOperator{\rk}{rank}\DeclareMathOperator{\spec}{Spec}\newfont{\cyrm}{wncysc10}
\DeclareMathOperator{\jac}{Jac}
\DeclareMathOperator{\divv}{div}
\DeclareMathOperator{\sym}{Sym}
\newcounter{nootje}
\newtheorem{theorem}{Theorem}\numberwithin{theorem}{section}\theoremstyle{plain}\newtheorem{corollary}[theorem]{Corollary}\newtheorem{lemma}[theorem]{Lemma}\newtheorem{proposition}[theorem]{Proposition}\theoremstyle{definition}\newtheorem{definition}[theorem]{Definition}\newtheorem{remark}[theorem]{Remark}\numberwithin{equation}{section}
\begin{document}
\title[Rank of abelian varieties]{Lower bounds for the rank of families of abelian varieties under base change}

\author[M. Hindry]{Marc Hindry}
\address{Institut de Math\'ematiques Jussieu -- Paris Rive Gauche (IMG-PRG)}
\curraddr{UFR de Math\'ematiques, B\^atiment Sophie Germain, Universit\'e Paris Diderot,
Paris 75013, France}
\email{marc.hindry@imj-prg.fr}

\author[C. Salgado]{Cec\'{\i}lia Salgado}
\address{Universidade  Federal do Rio de Janeiro (UFRJ)}
\curraddr{Instituto de Matem\'atica, Cidade Universit\'aria, Ilha do Fund\~ao, Rio de Janeiro, RJ, Brasil}
\email{salgado@im.ufrj.br}
\date{\today}

\subjclass{Primary 11G05, 11G10,11G30, 14D10;  14H40, 14K15}

\keywords{Diophantine geometry, arithmetic and algebraic geometry,  Abelian and Jacobian varieties,  Mordell-Weil group, rank}

\begin{abstract} 
We consider the following question : given a family of abelian varieties $\caa$ over a curve $B$ defined over a number field $k$, how does the rank of the Mordell-Weil group of the fibres $\caa_t(k)$ vary? A specialisation theorem of Silverman guarantees that, for almost all $t$ in $C(k)$, the rank of the fibre is at least the generic rank, that is the rank of $\caa(k(B))$. When the base  curve $B$ is rational, we show, at least in many cases and under some geometric conditions, that there are infinitely many fibres for which the rank is larger than the generic rank. This paper is a sequel to a paper of the second author \cite{sal} where the case of elliptic surfaces is treated.\end{abstract}

\maketitle
\section{Introduction}\label{intro}

Let $\pi :\caa\rightarrow B$ be a {\it family of abelian varieties} defined over a number field $k$, by which we mean a  flat morphism between projective varieties defined over $k$, such that the generic fibre is an abelian variety $A/K$, where $K=k(B)$. We will most of the time assume that $B$ is a curve and, in fact, the most interesting case will be when $B\cong\bbp^1_k$. Let $t$ be a geometric point of $B$, if $t$ belongs to a dense Zariski open subset $U\subset B$, the fibre $\caa_t:=\pi^{-1}\{t\}$ is again an abelian variety defined over $k(t)$. The Mordell-Weil theorem states that for any $t$, the group $\caa_t(k(t))$ is finitely generated; the Lang-N\'eron theorem asserts that $\caa(k(B))=A(K)$ is also finitely generated. We want to compare the rank of these groups.

The first tool for this is the use of the natural specialisation map $i_t:A(K)\rightarrow \caa_t(k(t))$, which we describe, when $B$ is a smooth curve. Since $\caa$ is projective and $B$ smooth, any point $P\in A(K)$ extends to a section $B\rightarrow \caa$ which can be restricted  to the fibre $\caa_t$. In symbol $i_t$ is the composite:
$$A(K)\cong \caa(B)\rightarrow \caa_t(k(t)).$$
The following results are due respectively to N\'eron and Silverman.

\begin{theorem}\label{specialise} Let $\pi :\caa\rightarrow B$ be a family of abelian varieties defined over a number field $k$; for any good point $t\in B(k)$ denote $i_t:A(K)\rightarrow \caa_t(k(t))$ the specialisation map.
\begin{enumerate}
\item (N\'eron \cite{ne}) The set of points where $i_t$ is not injective is a thin set.
\item (Silverman \cite{sil83}) Assume $B$ is a curve, the set of points where $i_t$ is not injective is a  set of bounded height. In particular the set of rational
points $t\in B(k)$ where the specialisation map fails to be injective is finite.
\end{enumerate}
\end{theorem}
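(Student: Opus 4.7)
The plan is to reduce both assertions to a single-point statement. Since $A(K)$ is finitely generated by Lang--N\'eron, one fixes a basis $P_1,\ldots,P_r$ of $A(K)$ modulo torsion and observes that, outside a finite set of $t$, the torsion subgroup of $A(K)$ injects into $\caa_t(k(t))$. Non-injectivity of $i_t$ then reduces to the existence of a nonzero integer combination $P=\sum n_iP_i$ with $i_t(P)$ torsion in $\caa_t(k(t))$, so it suffices to bound the set $\Sigma(P)=\{t\in B(k):i_t(P)\in\caa_t(k(t))_{\mathrm{tors}}\}$ for each non-torsion $P\in A(K)$.

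For (1), the approach is Hilbert irreducibility. Fix a non-torsion $P$ and a prime $\ell$ with $P\notin\ell\cdot A(K)$ (possible because $A(K)/\ell A(K)$ is finite). Over a dense open $U\subset B$ the isogeny $[\ell]:\caa\to\caa$ is finite \'etale, and pulling back by the section $B\to\caa$ attached to $P$ yields an \'etale cover $\widetilde B\to U$ whose $k$-points in the fibre over $t$ parametrise $\ell$-divisions of $i_t(P)$. Non-divisibility of $P$ by $\ell$ in $A(K)$ says precisely that this cover admits no section over the generic point of $B$, so Hilbert irreducibility makes the locus of $t\in B(k)$ for which $i_t(P)\in\ell\cdot\caa_t(k(t))$ thin. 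Iterating over a finite set of primes covers the case of torsion specialisations, and taking the union over a basis of $A(K)$ gives the first assertion.

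For (2), I would use heights. Equip $B$ with a Weil height $h_B$, each fibre $\caa_t$ with its N\'eron--Tate canonical height $\hat h_t$, and the generic fibre with $\hat h_K:A(K)\to\bbr$, a positive semidefinite quadratic form whose kernel is the torsion. The key technical input is Silverman's specialisation estimate
\[
\hat h_t\bigl(i_t(P)\bigr)=\hat h_K(P)\,h_B(t)+O_P\!\bigl(\sqrt{h_B(t)}\bigr)
\]
as $h_B(t)\to\infty$, proved by decomposing N\'eron--Tate heights into local contributions and tracking each one near the singular fibres of $\caa\to B$. Granted the estimate, non-torsionness of $P$ gives $\hat h_K(P)>0$ and hence $\hat h_t(i_t(P))>0$ once $h_B(t)$ exceeds a $P$-dependent constant. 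Applied simultaneously to a basis $P_1,\ldots,P_r$ of $A(K)$ modulo torsion (using that the $r\times r$ height pairing matrix on the specialisations has determinant asymptotic to $\Reg(A/K)\cdot h_B(t)^r$, hence is nonsingular), this shows $i_t$ is injective whenever $h_B(t)>C$ for a uniform constant. Northcott's theorem then produces the finiteness statement.

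The main obstacle is Silverman's asymptotic estimate itself: controlling the error $\hat h_t(i_t(P))-\hat h_K(P)\,h_B(t)$ uniformly in $t$ demands delicate bounds on local heights at places of bad reduction of the fibration $\caa\to B$. Once that is in place both assertions follow; in fact on a curve (2) is stronger than (1), since a set of bounded height on a curve is automatically thin, and the Hilbert irreducibility argument is really needed only for N\'eron's more general statement over higher-dimensional base varieties.
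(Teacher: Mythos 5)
First, a point of comparison: the paper does not prove Theorem \ref{specialise} at all --- both parts are imported from N\'eron \cite{ne} and Silverman \cite{sil83}, and the only argument the paper supplies (in Section \ref{kakatrace}) is the reduction removing Silverman's standing hypothesis that the $K/k$-trace is trivial, via an isogeny $A\sim A_0\times A_1$. So you are reconstructing the cited proofs from scratch. Your part (2) is essentially Silverman's own argument and is sound as a reduction to his limit formula $\hat h_t(i_t(P))/h_B(t)\to\hat h_K(P)$: the Gram-matrix/regulator device does correctly promote the single-point statement to injectivity on all of $A(K)$ at once, and Northcott finishes. Two caveats: the formula itself is the entire content of \cite{sil83}, so this is a reduction rather than a proof; and your claim that the kernel of $\hat h_K$ is exactly the torsion (equivalently that $\Reg(A/K)>0$) presupposes a trivial $K/k$-trace, which is precisely the hypothesis the paper's Section \ref{kakatrace} is written to remove.

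Part (1) has a genuine gap in the reduction to finitely many Hilbert-irreducibility conditions. You correctly observe that each non-torsion $P$ with $P\notin\ell A(K)$ yields a cover of $B$ with no rational section, whence a thin set of bad $t$; but you then conclude by ``taking the union over a basis of $A(K)$''. Injectivity of $i_t$ on each basis element separately does not give injectivity on the group: one can perfectly well have $i_t(P_1)$ and $i_t(P_2)$ of infinite order while $i_t(P_1-P_2)=0$. The honest union is over \emph{all} non-torsion $P\in A(K)$, which is an infinite union of thin sets and need not be thin. The classical repair (N\'eron; see \cite{semw}) is to index the division covers by the finitely many nonzero classes of $A(K)/\ell A(K)$ --- noting that the cover $[\ell]^{-1}(\sigma_P(B))\to B$ depends on $P$ only through its class modulo $\ell A(K)$ --- and to add two further finite lists of thin conditions: injectivity of $i_t$ on the (finite) torsion subgroup, and the absence of new $k$-rational $\ell$-torsion in the fibre $\caa_t$ (controlled by Hilbert irreducibility applied to the torsion cover $\caa[\ell]\to B$). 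A short descent then shows that any nonzero element of $\ker i_t$ would, after dividing out powers of $\ell$ and adjusting by torsion, produce a nonzero class of $A(K)/\ell A(K)$ violating one of these finitely many conditions. Without some such step your argument proves only that each individual non-torsion point survives specialisation outside a thin set, which is strictly weaker than the theorem.
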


As suggested by the name, the complement of a  {\it thin} set in $\bbp^n(k)$ or $\bba^n(k)$ (see \cite{semw} for a precise definition) contains most of the points of $\bbp^n(k)$ or $\bba^n(k)$. Silverman's theorem is stated with the hypothesis that $\caa$ has no constant part (i.e. the $K/k$-trace is zero), but one may readily dispense with it (see section \ref{heightsection}).
The following corollary is immediate from Silverman's theorem.

\begin{corollary}  Let $\pi :\caa\rightarrow B$ be a family of abelian varieties defined over a number field $k$, with $B$ a curve. For all but finitely many points $t\in B(k)$ we have
\begin{equation}\label{rklb}
\rk \caa_t(k)\geq \rk A(K).
\end{equation}
\end{corollary}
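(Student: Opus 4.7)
The plan is to reduce the corollary to part (2) of Theorem \ref{specialise} (Silverman's theorem) by observing that an injective homomorphism between finitely generated abelian groups cannot decrease the rank.

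First I would invoke Silverman's theorem directly: since $B$ is a curve, the set $S \subset B(k)$ of rational points where the specialisation map $i_t : A(K) \to \caa_t(k(t))$ fails to be injective is finite. This is the \emph{only} substantive input; everything that follows is a formal consequence.

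Next, for any $t \in B(k) \setminus S$, the map $i_t$ is an injective homomorphism of finitely generated abelian groups (finite generation of the source is the Lang--N\'eron theorem, and of the target is the classical Mordell--Weil theorem, both already cited in the excerpt). Since $t$ is a $k$-rational point we have $k(t) = k$, so the target is $\caa_t(k)$. Tensoring with $\bbq$ preserves injectivity, giving an injection $A(K) \otimes \bbq \hookrightarrow \caa_t(k) \otimes \bbq$ of finite-dimensional $\bbq$-vector spaces, hence
\[
\rk \caa_t(k) = \dim_{\bbq}\bigl(\caa_t(k)\otimes\bbq\bigr) \geq \dim_{\bbq}\bigl(A(K)\otimes\bbq\bigr) = \rk A(K).
\]
This establishes \eqref{rklb} for every $t \in B(k) \setminus S$, and $S$ is finite, which is exactly the statement of the corollary.

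There is essentially no obstacle here: the content of the corollary is entirely packaged inside Silverman's theorem, and the remaining step is the elementary fact that rank is monotone under injections. The only point worth double-checking is the identification $k(t)=k$, which uses that $t$ is taken in $B(k)$ rather than in a general geometric fibre, and the mild remark (already flagged in the paragraph preceding the corollary) that one does not need to assume $\caa$ has trivial $K/k$-trace in order to apply Silverman's result in this qualitative form.
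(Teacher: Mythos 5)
Your proof is correct and follows exactly the route the paper intends: the paper states the corollary is ``immediate from Silverman's theorem,'' and your write-up simply makes explicit the elementary step that an injection of finitely generated abelian groups cannot decrease rank, together with the observation (also made in the paper, and justified in its Section 3) that the trivial-trace hypothesis in Silverman's original statement can be dispensed with.
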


Notice that the conclusion of the corollary  is only interesting when $B(k)$ is infinite, i.e. when $B\cong\bbp^1_k$ or is an elliptic curve with positive rank.
Granting that $B(k)$ is infinite, a natural question is whether the inequality (\ref{rklb}) is almost always an equality or if a strict inequality can be proven for infinitely many $t$'s in $B(k)$. This is the main question addressed by one of the authors in \cite{sal}, for families of elliptic curves, i.e. when $\caa$ is an  elliptic surface or the relative dimension is $g=1$. We study in this paper the case when the relative dimension is arbitrary.
Our first contribution is to show the following.

\begin{definition} A family of abelian varieties $\pi :\caa\rightarrow B$  defined over a number field $k$ satisfies condition ($\cah_1$) when the variety $\caa$ is $k$-unirational. The condition implies $B\cong\bbp^1_k$.
\end{definition}
\begin{theorem}\label{theo1}
Let $\pi :\caa\rightarrow B$ be a family of abelian varieties defined over a number field $k$, with $B\cong\bbp^1_k$. Assume that the family satisfies condition ($\cah_1$). For infinitely many points $t\in B(k)$ we have
\begin{equation}\label{rklb1}
\rk \caa_t(k)\geq \rk A(K)+1.
\end{equation}
\end{theorem}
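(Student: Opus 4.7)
The plan is to use the $k$-unirationality of $\caa$ to construct a $k$-rational irreducible multisection $C \subset \caa$ of $\pi$ whose associated point in the base-changed Mordell--Weil group is genuinely new, and then invoke Silverman's specialisation theorem on the family pulled back to $C$.

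By hypothesis, there is a dominant rational $k$-map $\phi : \bbp^N_k \dashrightarrow \caa$. For a general $k$-rational line $L \subset \bbp^N_k$, the image $C := \overline{\phi(L)}$ is a $k$-irreducible curve isomorphic over $k$ to $\bbp^1_k$ (it is one-dimensional and $k$-unirational, with a $k$-point), and $\pi|_C : C \to B$ has positive degree, say $d$. Base-changing the family along $C \to B$ yields $\pi_C : \caa \times_B C \to C$, whose generic fibre is $A_{K'}$ with $K' := k(C)$; the inclusion $C \hookrightarrow \caa$ lifts to a section of $\pi_C$, providing a point $P_C \in A(K')$.

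The heart of the argument is to show that $L$ can be chosen so that $P_C$ has infinite order in the quotient $A(K')/(\iota(A(K)) + A(K')_{\tor})$, where $\iota : A(K) \hookrightarrow A(K')$ is the base-change map (and, should the $K'/k$-trace of $A_{K'}$ strictly exceed the $K/k$-trace of $A$, one further quotients by this extra trace image). The geometric idea is that as $L$ varies over the Grassmannian of lines in $\bbp^N_k$, the curves $C_L$ form an algebraic family whose union is Zariski dense in $\caa$. Conversely, a putative relation of the form $n P_{C_L} - \iota(Q) \in A(K')_{\tor}$ for fixed $n \in \bbz_{>0}$ and fixed $Q \in A(K)$ forces $C_L$ into a proper closed subvariety of $\caa$, essentially the preimage under multiplication-by-$n$ on the smooth locus of the relative group scheme of the section determined by $Q$. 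Since $A(K)$ is countable, a density or Hilbert-irreducibility argument applied to the family of lines rules out all such relations for a generic $L$. This is the main obstacle, and will require careful bookkeeping of trace contributions together with control over the relative group scheme.

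Granting the independence, we apply Theorem \ref{specialise}(2) to the family $\pi_C : \caa \times_B C \to C$: the specialisation map $A(K') \to \caa_{\pi(t')}(k)$ is injective for all but finitely many $t' \in C(k)$. For any such $t'$, setting $t := \pi(t') \in B(k)$, the subgroup $\langle i_t(A(K)), P_C|_{t'}\rangle$ of $\caa_t(k)$ has rank at least $\rk A(K) + 1$. Since $C \cong \bbp^1_k$ has infinitely many $k$-points and $\pi|_C$ has finite fibres, this yields infinitely many distinct $t \in B(k)$ satisfying (\ref{rklb1}).
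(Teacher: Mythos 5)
Your overall architecture matches the paper's: use unirationality to produce a $k$-rational multisection $C$ of $\pi$, base change along $C\rightarrow B$ to get a new point $P_C\in A(k(C))$, show it is independent of the image of $A(K)$, and finish with Silverman's specialisation theorem. The construction of $C$ and the final specialisation step are fine. The genuine gap is in what you yourself flag as ``the main obstacle'': ruling out relations $nP_{C_L}-\iota(Q)\in A(K')_{\tor}$. You propose to note that each fixed pair $(n,Q)$ confines $L$ to a proper closed subvariety of the space of lines and then to invoke a density or Hilbert-irreducibility argument. But there are countably many pairs $(n,Q)$, and $k$ is a number field, hence countable: a countable union of proper closed subvarieties can perfectly well contain every $k$-rational line, just as $\bbp^1(k)$ is a countable union of points. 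Hilbert irreducibility only excludes thin sets, and to show that the union of all the bad loci is thin you would again need some uniform finiteness over $n$ and $Q$. As written, the independence of $P_{C_L}$ for some $k$-rational $L$ is not established.

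The paper supplies exactly the missing finiteness in its key Lemma \ref{prpimportante}, applied to a pencil of rational curves on the unirational surface $\phi(\bbp^2)$. First, Kummer theory bounds $n$: if $[n]C=C_0$ is a section with associated point $P_0$, a division point $P$ with $[n]P=P_0$ has $[K(P):K]$ equal to the degree $h=(C\cdot F)$ of the multisection, which is constant in the pencil, and Lemma \ref{serre} together with Proposition \ref{kt} (which gives $[K(\frac{1}{m}P_1):K]\geq c_Am$ for $P_1$ indivisible by $m$) forces $n\leq n_0(h)$. Second, for each fixed $n\leq n_0$, Lemma \ref{numclass} shows that the N\'eron--Tate height of the section $[n]C$ is constant as $C$ moves in the pencil, so these sections lie in a finite set $\Sigma_0$ by Northcott's property for the Mordell--Weil lattice. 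Only then does the bad locus become a finite union of proper closed conditions, which a general member of the pencil avoids. To close your argument you need to import both of these inputs (or equivalents); the ``careful bookkeeping'' you defer is precisely the content of the theorem's proof.
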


This will be achieved by showing a more precise statement. When $\psi :C\rightarrow B$ is a finite morphism of smooth projective curves, we denote
$\caa_C\rightarrow C$ a desingularisation of the fibered product $\caa\times_BC\rightarrow C$.

\begin{theorem}\label{theo1bis}
Let $\pi :\caa\rightarrow B$ be a family of abelian varieties defined over a number field $k$, with $B\cong\bbp^1_k$. Assume that the family satisfies condition ($\cah_1$). There exists a curve $C/k$ with a finite covering $\psi: C\rightarrow B$ such that
\begin{enumerate}
\item We have the inequality $\rk \caa_C(k({C}))\geq \rk A(K)+1$.
\item The set of rational points $C(k)$ is infinite.
\end{enumerate}
\end{theorem}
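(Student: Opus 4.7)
The plan is to exploit the unirationality of $\caa$: by condition $(\cah_1)$, fix a dominant rational map $f : \bbp^N_k \dashrightarrow \caa$ defined over $k$. The covering $C \to B$ will be produced from the image $f(L) \subset \caa$ of a sufficiently general $k$-rational line $L \subset \bbp^N_k$. Since the indeterminacy locus of $f$ has codimension at least $2$ in $\bbp^N$, a general line avoids it, so $f|_L$ is a morphism and its image $f(L) \subset \caa$ is a geometrically irreducible rational curve. Take $C$ to be the normalisation of $f(L)$: this is a smooth projective curve of geometric genus $0$ with a $k$-rational point, so $C \cong \bbp^1_k$, which immediately settles assertion (2), since $C(k)$ is infinite.

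Let $g : C \to \caa$ be the induced morphism (birational onto $f(L)$) and $\psi := \pi \circ g : C \to B$ the finite cover. By the universal property of the fibered product and desingularisation, $g$ determines a section $\sigma \in \caa_C(k(C))$. The main task is to choose $L$ so that $\sigma$ is $\bbz$-linearly independent modulo torsion from $i_C(A(K))$, which yields $\rk \caa_C(k(C)) \geq \rk A(K) + 1$. As $\caa_C(k(C))$ is finitely generated by Lang-N\'eron, its torsion is finite, so it suffices to rule out every exact equality $n\sigma = i_C(P)$ with $n \geq 1$, $P \in A(K)$. Such an equality is equivalent to the identity $[n] \circ g = s_P \circ \psi$ of morphisms $C \to \caa$, where $s_P : B \to \caa$ is the section attached to $P$; it forces $g(C)_{\bar k}$ to coincide with one of the finitely many geometric components of $[n]^{-1}(s_P(B))_{\bar k}$.

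For each fixed pair $(n,P)$, the locus of $\lambda \in \gr(1,N)$ for which $f(L_\lambda)_{\bar k}$ is one of these components is a proper closed subvariety $V_{n,P} \subsetneq \gr(1,N)$: were it all of $\gr(1,N)$, the assignment from the irreducible variety $\gr(1,N)$ into the finite set of components of a fixed $[n]^{-1}(s_P(B))$ would be constant, and dominance of $f$ would force $\caa_{\bar k}$ to lie on a single $1$-dimensional component---absurd. The main obstacle is then to produce $\lambda \in \gr(1,N)(k)$ outside the \emph{countable} union $\bigcup_{(n,P) \in \bbz_{>0} \times A(K)} V_{n,P}$. Over a number field this is genuinely delicate, because $\gr(1,N)(k)$ is countable and a countable union of proper closed subvarieties of $\gr(1,N)$ can in principle cover all of it; the plausible routes are either a uniform height bound on bad $\lambda$'s in the spirit of Silverman's Theorem \ref{specialise}(2) combined with a Hilbert-irreducibility argument, or the geometric observation that $f(L_\lambda)$ being rational forces the target component $Z_{n,P,i}$ to have geometric genus $0$, which should hold for only finitely many components and thus reduce the countable union to a finite one.
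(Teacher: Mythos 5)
Your setup matches the paper's in spirit (the paper also produces a pencil of $k$-rational curves on a unirational surface inside $\caa$, via the dominant map from projective space, and studies the new section each such curve induces), and your reduction of the problem to ruling out relations $n\sigma=i_C(P)$ is correct. But the proposal stops exactly at the crux. You candidly note that you must avoid a \emph{countable} union $\bigcup_{(n,P)}V_{n,P}$ of proper closed subvarieties, that over a number field this is ``genuinely delicate,'' and you only gesture at two possible fixes without carrying either out. That countable-to-finite reduction is the actual content of the theorem; without it there is no proof. Neither of your two suggested routes is developed, and the second one (``$f(L_\lambda)$ rational forces the component to have genus $0$, which should hold for only finitely many components'') does not by itself bound $n$: a priori nothing prevents $[n]^{-1}(s_P(B))$ from having rational components for infinitely many $n$, and even for fixed $n$ you still have infinitely many $P\in A(K)$ to handle.

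The paper closes this gap in Lemma \ref{prpimportante} by two separate finiteness arguments, and you should compare. First, $n$ is bounded \emph{a priori}: every curve $C$ in the pencil has the same degree $h=(C\cdot F)$ over the base, and if $[n]C=C_0$ is a section then the corresponding point $P$ with $[n]P=P_0$ satisfies $[K(P):K]\leq h$; Kummer theory for abelian varieties (Lemma \ref{serre} for the torsion part, Proposition \ref{kt} for the free part, i.e.\ $[K(\tfrac1m P):K]\geq c_A m$) then forces $n\leq n_0(h,A)$. This is where the arithmetic input (Masser/Serre, Ribet/Faltings) enters, and it is entirely absent from your argument. Second, for each fixed $n\leq n_0$, the possible sections $C_0=[n]C$ form a \emph{finite} set: Lemma \ref{numclass} shows that as $C$ moves in the pencil the curves $[n]C$ are rationally, hence numerically, equivalent, so by Lemma \ref{ntint} the N\'eron--Tate height of $P_0$ is constant along the pencil; since the height is a positive definite quadratic form on the finitely generated group $A(K)\otimes\bbr$ (trivial trace), only finitely many $P_0$ can occur. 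Together these replace your countable union by a finite union of proper closed conditions, which a pencil over $\bbp^1_k$ can avoid for all but finitely many members. Until you supply substitutes for these two steps, the proposal is an outline of the strategy rather than a proof.
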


\begin{remark} A natural question is : given a family of abelian varieties $\pi:\caa\rightarrow B\cong\bbp^1_k$, under which conditions does the conclusion of Theorem \ref{theo1} ``for infinitely many  $t\in B(k)$, the rank of $\caa_t(k)$ is greater than the rank of $\caa(k(B))$" still holds? An obvious necessary condition is that $\caa$ should not be constant, i.e. $k$-birational to $A_0\times_kB$, for some abelian variety $A_0/k$. This seems to be a sufficient condition, but we are unable to prove this. Notice that the condition $(\mathcal{H}_1)$ excludes this trivial case and in fact implies more, namely that $\caa$ has no constant part, i.e. its $K/k$ trace is trivial. However having a trivial $K/k$ trace is not a necessary condition.
\end{remark}

\begin{remark} One can raise questions about the density in $\bbp^1(k)$ of points above which the fibre has rank greater than the generic rank, it is in general conjectured that there is a positive proportion  of such points. Ordering points in $\bbp^1(k)$ by their multiplicative height, we recall (see for example \cite{hs}, Theorem B.6.2) that
$\#\{x\in \bbp^1(k)|\;H_k(x)\leq T\}\sim c_kT^2$. It is therefore expected that, in the situation of Theorem  \ref{theo1bis} we should have
$$\#\{x\in \bbp^1(k)|\;H_k(x)\leq T\;{\rm and}\; \rk \caa_C(k({C}))\geq \rk A(K)+1\}\sim c_1T^2?$$
Our arguments yields the following weaker statement:
$$\#\{x\in \bbp^1(k)|\;H_k(x)\leq T\;{\rm and}\; \rk \caa_C(k({C}))\geq \rk A(K)+1\}\geq c_2T^{2/d},$$
where $d$ is the degree of the map $\psi$ in Theorem \ref{theo1bis}.
\end{remark}

Clearly condition ($\cah_1$) can be weakened asking for the same property for an abelian subfamily; similarly we will see that the proof of Theorem \ref{theo1} or \ref{theo1bis} only requires the existence of a unirational surface inside $\caa$ projecting onto the base (or even only the existence of a pencil of rational curves). Instead of formally stating this remark, we will develop it in the context of the
very interesting examples of families of abelian varieties provided by {\it families of Jacobian varieties}. 

\medskip

Starting from a family of curves of genus $g\geq 1$, that is a flat morphism from a surface to a curve $\pi :\cax\rightarrow B$ with generic fibre $X/K$ a curve of genus $g$, we obtain a family of abelian varieties $\caj\rightarrow B$ whose fibre  at $t\in B$, for $t$ lying outside a finite set, is the jacobian variety of the fibre $\cax_t$. We will denote $\caj$ or $\caj_{X}$ this family.

\begin{definition} A family of curves of genus $g\geq 2$, denoted  $\pi :\cax\rightarrow B$  defined over a number field $k$ satisfies condition ($\cah_2$) when the surface $\cax$ possesses a $B$-morphism generically finite of degree $d\leq g$ onto a $k$-unirational surface $\cax_0$ and moreover such that the generic fibre of the morphism $\cax_0\rightarrow B$ is a curve of genus greater or equal to 1. In other words, there is a $k$-morphism $f$ onto a unirational surface $\cax_0$ :
$$\begin{matrix}\cax&\buildrel{f}\over{ \rightarrow}& \cax_0&\hookleftarrow&(\cax_0)_{\eta}\cr
\downarrow&&\downarrow&&\downarrow\cr
B&=&B&\hookleftarrow&\spec(k(B))\cr
\end{matrix}
$$
which has generic degree $d\leq g$ and $(\cax_0)_{\eta}$ has genus $\geq 1$.
The unirationality of $\cax_0$ implies that $B\cong\bbp^1_k$.
\end{definition}
\begin{theorem}\label{theo2}
Let $\pi :\cax\rightarrow B$ be a family of curves of genus $g\geq 1$ defined over a number field $k$, with $B\cong\bbp^1_k$. Assume that the family satisfies condition ($\cah_2$). 
For infinitely many points $t\in B(k)$ we have
\begin{equation}\label{rklbjac1}
\rk \caj_t(k)\geq \rk J(K)+1.
\end{equation}
\end{theorem}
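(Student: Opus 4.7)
The plan is to reduce Theorem \ref{theo2} to Theorem \ref{theo1bis} applied to the Jacobian family $\caj_{X_0}$ of the auxiliary fibration $\cax_0 \to B$, and to transfer the resulting extra rank to $\caj$ via the pullback $f^\ast : \caj_{X_0} \to \caj$ of abelian schemes induced by $f : \cax \to \cax_0$.

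The key preliminary is to verify that $\caj_{X_0}$ satisfies condition $(\cah_1)$, i.e.\ that $\caj_{X_0}$ is $k$-unirational. Let $g_0 \geq 1$ denote the genus of the generic fibre of $\cax_0 \to B$. The relative symmetric power $\sym^{g_0}_B \cax_0$ is $k$-unirational, being a finite quotient of the iterated fibre product $\cax_0 \times_B \cdots \times_B \cax_0$ ($g_0$ factors): a dominant rational map from $\bbp^N$ to $\cax_0$ induces a dominant rational map from an iterated fibre product of projective bundles over $B\cong\bbp^1_k$, which is rational over $k$. After replacing $B$ by an auxiliary finite cover $B' \to B$ obtained from a multisection of $\cax_0 \to B$ (an operation that preserves unirationality of the total space), one acquires a $k(B')$-rational degree-$g_0$ divisor on the generic fibre, and the relative Abel--Jacobi morphism $\sym^{g_0}_{B'}(\cax_0 \times_B B') \to \caj_{X_0} \times_B B'$ becomes birational. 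Hence $\caj_{X_0} \times_B B'$, and therefore $\caj_{X_0}$ itself, is $k$-unirational. This step is the main technical obstacle: while the unirationality of the relative symmetric product is routine, the relative Abel--Jacobi map only directly produces a birational map onto the $\caj_{X_0}$-torsor $\pic^{g_0}_{X_0/B}$, so an auxiliary base change of $B$ is required to trivialise this torsor.

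Applying Theorem \ref{theo1bis} to $\caj_{X_0}$ furnishes a curve $C/k$ with $C(k)$ infinite, equipped with a finite cover $\psi : C \to B$, such that the Mordell--Weil rank of $\caj_{X_0} \times_B C \to C$ is at least $\rk J_0(K) + 1$, where $J_0 = \jac((\cax_0)_\eta)$. The morphism $f$ induces on generic Jacobians a pullback $f^\ast : J_0 \to J$ which is injective up to isogeny, since $f_\ast \circ f^\ast = d \cdot \mathrm{id}_{J_0}$. The transfer of extra rank rests on the following observation: for any field extension $L/K$, if $\alpha \in J_0(L) \otimes \bbq$ satisfies $f^\ast \alpha \in J(K) \otimes \bbq$, then $d\alpha = f_\ast f^\ast \alpha \in J_0(K) \otimes \bbq$, whence $\alpha \in J_0(K) \otimes \bbq$. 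Consequently $f^\ast$ induces an injection $(J_0(L)/J_0(K)) \otimes \bbq \hookrightarrow (J(L)/J(K)) \otimes \bbq$, and taking $L = k(C)$ shows that $\caj \times_B C \to C$ has Mordell--Weil rank at least $\rk J(K) + 1$. Silverman's specialisation theorem (Theorem \ref{specialise}) then provides, for all but finitely many $t \in C(k)$, an injective specialisation map, so $\rk \caj_{\psi(t)}(k) \geq \rk J(K) + 1$; since $C(k)$ is infinite and $\psi$ is finite, infinitely many $s \in B(k)$ satisfy the desired inequality.
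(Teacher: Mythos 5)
Your overall strategy (work with the Jacobian family of $\cax_0$ and transfer the extra rank to $J$ via $f^\ast$, using $f_\ast f^\ast=d\cdot\mathrm{id}$) is reasonable, and the transfer step itself is correct. The genuine gap is the key preliminary claim that $\caj_{X_0}$ satisfies $(\cah_1)$, i.e.\ that the \emph{total space} $\caj_{X_0}$ is $k$-unirational. Your argument for this is that the iterated fibre product $\cax_0\times_B\cdots\times_B\cax_0$ is dominated by ``an iterated fibre product of projective bundles over $B$''. But the parametrizing $\bbp^N\dashrightarrow\cax_0$ is not a $B$-morphism of a projective bundle: the composite $\bbp^N\dashrightarrow\cax_0\to B$ is an arbitrary dominant rational map to $\bbp^1$, whose generic fibre is in general irrational (even of general type). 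Consequently $\bbp^N\times_B\bbp^N$ has generic fibre over $B$ a self-product of that irrational variety and need not dominate anything unirational; already for $\cax_0=\bbp^2$ with a pencil of quartics one gets a non-uniruled threefold. Unirationality is simply not known (and for $g_0\geq 2$ is not expected) to pass to fibre products, symmetric fibre powers, or to the $(g_0+1)$-dimensional variety $\caj_{X_0}$. This is precisely the difficulty the paper is built to sidestep: Lemma \ref{prpimportante} and Corollary \ref{principal} only require a $k$-unirational \emph{surface} sitting inside the abelian fibration, not unirationality of the total space, and the proof of Theorem \ref{theo2bis} produces such a surface in $\caj$ directly as the image of $\cax_0$ under $\cax_0\to\sym_B^d\cax\xrightarrow{\varphi_d}\caj$ (using Lemma \ref{surf} and the genus $\geq 1$ hypothesis on $(\cax_0)_\eta$ to see the image is still a surface).

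Your approach could be repaired along the paper's lines: instead of invoking Theorem \ref{theo1bis} for $\caj_{X_0}$, exhibit a $k$-unirational multisection surface inside $\caj_{X_0}$ (e.g.\ an Abel--Jacobi image of $\cax_0$, which itself requires care when there is no section) and invoke Corollary \ref{principal}; your $f^\ast$ transfer would then finish the argument. But note that the paper's route, mapping $\cax_0$ into $\caj=\caj_X$ directly through $\sym_B^d\cax$, makes both the unirationality reduction and the rank transfer unnecessary, and it is where the hypothesis $d\leq g$ of $(\cah_2)$ is actually exploited. As written, the proposal's central unirationality step fails, so the proof is not complete.
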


This will be achieved by showing more precisely.

\begin{theorem}\label{theo2bis}
Let $\pi :\cax\rightarrow B$ be a family of curves of genus $g\geq 1$ defined over a number field $k$, with $B\cong\bbp^1_k$. Assume that the family satisfies condition ($\cah_2$). There exists a curve $C/k$ with a finite covering $C\rightarrow B$ such that
\begin{enumerate}
\item Let $\cax_C:=\cax\times_{B}C$ and $\pi_C:\cax_C\rightarrow C$ the projection. we have the inequality $\rk \caj_{\cax_C}(k({C}))\geq \rk J(K)+1$.
\item The set of rational points $C(k)$ is infinite.
\end{enumerate}
\end{theorem}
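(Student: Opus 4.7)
The plan is to mimic the construction used in Theorem~\ref{theo1bis}, replacing the unirational total space by the auxiliary unirational surface $\cax_0$ supplied by $(\cah_2)$: we manufacture a new section of the induced Jacobian fibration from a rational curve on $\cax_0$, then transport it to $\caj$ via the $B$-morphism $f:\cax\to\cax_0$.

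First I construct the base curve. Since $\cax_0$ is $k$-unirational, fix a dominant rational $k$-map $\phi:\bbp^2\dashrightarrow\cax_0$. For a general $k$-line $\ell\subset\bbp^2$, put $C:=\ell\cong\bbp^1_k$ and $\rho:=\phi|_\ell:C\to\cax_0$; then $C(k)$ is infinite, settling~(2). The composition $\psi:=\pi_0\circ\rho:C\to B$ is finite of some degree $m$, and $m\geq 2$: indeed, the generic fibre of $\pi_0$ has genus $\geq 1$ by $(\cah_2)$, so its preimage under $\phi$ is a plane curve of degree $\geq 2$ (otherwise it would be rational, contradicting positive genus), met by a generic line in $\geq 2$ points.

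Second, I build a class in $J_0(k(C))$, where $J_0:=\jac((\cax_0)_\eta)$. The $B$-morphism $\rho$ yields a section $\sigma:C\to\cax_0\times_BC$, hence a $k(C)$-rational point $P\in(\cax_0)_\eta(k(C))$. The image $R=\rho(C)\subset\cax_0$ meets the generic fibre of $\pi_0$ in a $K$-rational effective divisor, producing a class $[R_\eta]\in\pic^m((\cax_0)_\eta)(K)$. Set
\[
\alpha:=m[P]-\pi^*[R_\eta]\in J_0(k(C)),
\]
with $\pi^*$ the pullback along $\spec k(C)\to\spec K$. The $B$-morphism $f$ restricts on generic fibres to a dominant map of curves of degree $d\leq g$, inducing $f^*:J_0\to J$ with finite kernel; consequently, up to isogeny $J$ splits as $J_0\times J_1$. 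Independence of $\alpha$ from $\pi^*J_0(K)$ (up to torsion) therefore translates into independence of $f^*\alpha$ from $\pi^*J(K)$ in $\caj_{\cax_C}(k(C))\otimes\bbq$, yielding~(1).

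The main obstacle is proving that $\alpha\notin\pi^*J_0(K)\otimes\bbq$. The core mechanism is specialisation: suppose $n\alpha=\pi^*\beta$ with $\beta\in J_0(K)$; then for a good $t\in B(k)$ and two distinct preimages $c_1,c_2\in\psi^{-1}(t)$ (available since $m\geq 2$),
\[
n(\alpha_{c_1}-\alpha_{c_2})=nm\bigl([\rho(c_1)]-[\rho(c_2)]\bigr)=0\text{ in }J_{0,t}(k).
\]
For generic $t$, $\rho(c_1)\neq\rho(c_2)$, and since $(\cax_0)_t$ has genus $\geq 1$, two distinct points of it are never linearly equivalent, so $[\rho(c_1)]-[\rho(c_2)]\neq 0$ in $J_{0,t}$. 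The delicate point is excluding that this difference is uniformly $N$-torsion as $t$ varies: one recognises it as the specialisation of the class $[\rho\circ p_1]-[\rho\circ p_2]\in J_0(k(C\times_BC))$ and argues---by moving $\ell$ in a pencil of lines in $\bbp^2$, or by invoking the specialisation theorem~\ref{specialise} for this auxiliary family of Jacobians---that the generic class is not torsion. This torsion-exclusion step is the only non-formal ingredient; everything else is a direct adaptation of Theorem~\ref{theo1bis}.
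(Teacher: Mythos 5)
Your construction of the new point is essentially the paper's: pushing the unirational surface $\cax_0$ into the Jacobian fibration via $x\mapsto f^*\bigl((x)-(\text{base pt})\bigr)$ is exactly what the paper does with the composite $\cax_0\to\sym_B^d\cax\xrightarrow{\varphi_d}\caj$ (Lemma \ref{surf}), and your reduction of statement (1) to ``$\alpha\notin\pi^*J_0(K)\otimes\bbq$'' via Poincar\'e reducibility is sound. The problem is that you have deferred, rather than proved, the one step that carries all the weight. Your own reformulation shows that everything hinges on the class $\delta=[\rho(c_1)]-[\rho(c_2)]\in J_0(k(C\times_BC))$ not being torsion, and you offer only two gestures toward this. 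The first, ``invoking the specialisation theorem,'' runs in the wrong direction: Theorem \ref{specialise} tells you that a non-torsion generic class has non-torsion specialisations, not that a class with non-trivial specialisations is non-torsion; using it here is circular. The second, ``moving $\ell$ in a pencil of lines,'' is the right instinct but is not an argument: a priori the torsion order of $\delta_\ell$ could grow with $\ell$, and ruling this out requires the uniform torsion bound of Lemma \ref{serre} (applicable because $[k(C_\ell\times_BC_\ell):K]\leq m(m-1)$ is bounded along the family); moreover a one-dimensional pencil of lines only sweeps out a surface inside the threefold $\cax_0\times_B\cax_0$, which is not enough to run the natural ``if $N\delta_\ell=0$ for all $\ell$ then $N\cdot AJ$ is constant on the genus $\geq 1$ fibre'' contradiction -- for that you need the full two-dimensional family of lines. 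None of this is in your write-up.

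This missing step is precisely what Sections \ref{kummer}--\ref{key} of the paper exist for: the paper's proof of Theorem \ref{theo2bis} transports $\cax_0$ into $\caj$ as a $k$-unirational surface and then quotes Lemma \ref{prpimportante}, whose proof combines Kummer theory (Lemma \ref{serre} and Proposition \ref{kt}, to bound the integer $n$ with $[n]C$ a section) with the height computation of Lemma \ref{numclass} (to confine the possible sections $[n]C$ to a finite set as $C$ moves in the pencil). Your route, if completed with the uniform torsion bound and the sweeping argument over all lines, would give a genuine variant that bypasses the height-theoretic half of the key lemma; as written, however, it proves the easy reductions and leaves the theorem's actual content unestablished. (Two minor further points: a plane curve dominating a curve of genus $\geq 1$ has degree $\geq 3$, not merely $\geq 2$ -- conics are rational; and you should say why a general \emph{$k$-rational} line avoids both the indeterminacy locus of $\phi$ and the bad locus where $\delta_\ell$ is torsion, the latter being a proper Zariski-closed subset of the dual plane once the torsion order is uniformly bounded.)
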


Throughout this paper, until stated otherwise,  we will assume that the $K/k$-trace of $A/K$ is trivial. This is done to simplify the exposition and is not generally necessary; we explain in a short paragraph (section \ref{kakatrace}) which statements continue to hold in the presence of a non-trivial  $K/k$-trace and how to modify proofs. 
\medskip

\noindent{\bf Examples.} Finally, we provide examples of applications of our techniques and results. 
The first one is that for $g\leq 5$ (the statement for $g\leq 3$ is easier to prove) a ``dense" set of abelian varieties of dimension $g$ satisfies $\rk A(k)\geq 1$ (see Proposition \ref{example1} for a precise statement).
Next we consider the hyperelliptic curve $X_0$ given over $K_0=k(u)$ by the equation
$$y^2=(x-e_1)\dots(x-e_{2g+1})+u$$
and its jacobian $A=\jac(X_0)$; we show that $\rk A(K_0)$ is zero but, if $t^2=u$ and $K=k(t)$, then $\rk A(K)=2g$ and there are infinitely many
$u\in k$ such that $\rk A_u(k)\geq 2g+1$ (see Proposition \ref{example2}).

The final example is the curve $X$ over $K=k(t)$ given in affine equations
$$y^2=(x-e_1)\dots(x-e_{2g_1+1})+t^2\quad{\rm and}\quad z^2=q(x).$$
We show that the jacobian $A=\jac(X)$ has a constant part $A_0$ (also called $K/k$-trace) isogenous to the jacobian of $z^2=q(x)$ and that $\rk J(K)/A_0(k)=4g_1+1$ and again there are infinitely many
$t\in k$ such that $\rk A_t(k)\geq \rk A(K)+1$ (see Proposition \ref{example3}). Under stricter conditions, namely $\deg(p)=3$ and $\deg(q)=4$ and for some root of $p(x)$, say $e_1$, we have $q(e_1)=a_1^2$,  there are even  infinitely many
$t\in k$ such that $\rk A_t(k)\geq \rk A(K)+2$ (see Proposition \ref{example4}).

\smallskip

\noindent{\bf Plan.}  The paper is organised as follows. The next two sections provide preliminaries describing the results from Kummer theory (section \ref{kummer}) and height theory \ref{heightsection} required to prove the main technical result (section \ref{key}, Lemma \ref{prpimportante}), which is then used to prove Theorem \ref{theo1bis}. The fifth section review the theory of families of Jacobians and contains the proof of theorem \ref{theo2bis}. The final section contains a detailed presentation of the examples outlined above and in particular the construction in a special case of a double base change raising the rank by at least two.

\smallskip

\noindent{\bf Aknowledgments.}  The first author thanks the \textit{R\'eseau Franco-Br\'esilien en math\'e\-matiques} for providing support for visiting  Impa and UFRJ in Rio de Janeiro. The second author was partially supported by CNPq grants 305743/2014-7 and 446873/2014-4, and FAPERJ E-26/203.205/2016. Both authors thank Am\'ilcar Pacheco for useful conversations.

\section{Kummer Theory}\label{kummer}

We keep the previous notation, in particular $K=k(B)$ is finitely generated over $\bbq$.
Let $A_m:=\Ker\left\{[m]:A(\bar{K})\rightarrow A(\bar{K})\right\}$ denote the kernel of multiplication by $m$. It is well known that, as groups,
$A_m\cong( \bbz/m\bbz)^{2g}$.

We will use the following lemma which can be proven by transcendence methods (Masser \cite{mas}) or Galois representation theory (Serre \cite{ser1}); the papers quoted provide a proof over number fields, from which an extension to finitely generated fields can readily be asserted.

\begin{lemma}\label{serre} There exists two constants $c=c(A,K)$ and $\gamma=\gamma(g)$, such that for any torsion point $T\in A(\bar{K})$ of order $m$ we have
$$[K(T):K]\geq cm^{\gamma}.$$

\end{lemma}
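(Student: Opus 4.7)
The plan is to reduce the assertion to its known counterpart over number fields via specialization. When $K$ is a number field, the result is essentially Serre's theorem that the image of $\gal(\bar K/K)$ acting on the Tate module $T_\ell(A)$ has finite index in its Zariski closure, a connected reductive subgroup of $\gl_{2g}(\bbz_\ell)$. Any torsion point $T\in A[\ell^n]\setminus A[\ell^{n-1}]$ is moved to at least $c\ell^{n\gamma}$ elements under this image, for an explicit $\gamma=\gamma(g)>0$ coming from the size of a generic orbit in $(\bbz/\ell^n\bbz)^{2g}$ under the Galois image. Writing $m=\prod_\ell \ell^{n_\ell}$ and decomposing $T$ via the Chinese Remainder Theorem into $\ell$-primary components yields the number-field case in full.

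To extend this to a finitely generated $K/\bbq$, I would spread out: pick a finitely generated $\bbz$-subalgebra $R\subset K$ with $\mathrm{Frac}(R)=K$ and a smooth model $\caa_U\to U:=\spec R$ of $A$. Then choose two closed points $u_1,u_2\in U$ with distinct residue characteristics $p_1\neq p_2$ and with $\caa$ of good reduction at each. The residue fields $k(u_i)$ are number fields. For $T\in A(\bar K)$ of order $m$ coprime to $p_i$, the specialisation map $A[m]\hookrightarrow \caa_{u_i}[m]$ is injective and equivariant for the decomposition group at $u_i$, which acts through $\gal(\bar{k(u_i)}/k(u_i))$ since the prime-to-$p_i$ torsion is unramified. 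Hence
$$[K(T):K]=|\gal(\bar K/K)\cdot T|\;\geq\;|\gal(\bar{k(u_i)}/k(u_i))\cdot T_{u_i}|=[k(u_i)(T_{u_i}):k(u_i)]\;\geq\; c_i\,m^{\gamma},$$
by the number field case applied to $\caa_{u_i}/k(u_i)$.

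For arbitrary $m$, decompose $T=T_i'+T_i''$ so that $T_i'$ carries the prime-to-$p_i$ part of the order, namely $n_i:=m/p_i^{v_{p_i}(m)}$. Since $T_i'$ is a $\bbz$-multiple of $T$, we have $K(T_i')\subseteq K(T)$, and applying the previous bound to each $T_i'$ at $u_i$ gives
$$[K(T):K]^2 \;\geq\; c_1 c_2\,(n_1 n_2)^{\gamma}\;\geq\; c_1 c_2\,m^{\gamma},$$
because $n_1 n_2 = m^2/(p_1^{v_{p_1}(m)}p_2^{v_{p_2}(m)})\geq m$. Taking square roots produces the desired inequality with constants $c(A,K)$ and exponent $\gamma(g)/2$.

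The main obstacle is the number field case, which is the deep input supplied by Serre's $\ell$-adic representation theory (or alternatively by Masser's transcendence methods). The reduction step itself is routine, but one must use \emph{two} auxiliary places of good reduction with different residue characteristics in order to obtain a bound that is uniform in $m$, rather than only in $m$ coprime to a fixed prime.
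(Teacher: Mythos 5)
The paper offers no proof of this lemma: it cites Masser and Serre for the number-field case and asserts that the extension to finitely generated fields ``can readily be asserted.'' Your proposal is therefore judged on whether your reduction step actually works, and it does not, for a concrete reason: closed points of $\spec R$, for $R$ a finitely generated $\bbz$-algebra, have \emph{finite} residue fields (this is the Nullstellensatz over $\bbz$), not number fields. A point $u_i$ of residue characteristic $p_i>0$ hands you an abelian variety over a finite field, where the bound you want to quote is badly false: over $\bbf_q$ every point of $A(\bar{\bbf}_q)$ is torsion, and since $\#A(\bbf_{q^n})$ is of size roughly $q^{ng}$, a point of order $m$ can already be rational over $\bbf_{q^n}$ with $n$ of order $\log m$. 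So $[k(u_i)(T_{u_i}):k(u_i)]$ need only grow logarithmically in $m$, nowhere near $c\,m^{\gamma}$, and the two-place coprimality argument at the end cannot repair an input that is already too weak at each place.

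The correct (and genuinely routine) reduction stays in equicharacteristic zero. Here $K=k(B)$ with $k$ a number field; choose a closed point $t$ of $B$ at which $A$ has good reduction, so that the residue field $k(t)$ is a number field. Because the residue characteristic is $0$, reduction is an isomorphism on $A[m]$ for \emph{every} $m$, inertia at $t$ acts trivially on torsion, and the decomposition group surjects onto $\gal(\overline{k(t)}/k(t))$ compatibly with reduction; comparing orbit sizes gives $[K(T):K]\geq [k(t)(T_t):k(t)]\geq c\,m^{\gamma}$ with a single auxiliary place and no restriction on $m$. (For $K$ of higher transcendence degree over $\bbq$ one iterates through a chain of equicharacteristic-zero specializations.) A smaller quibble: the uniform exponent $\gamma(g)$ in the number-field case does not follow formally from openness of the Galois image in its Zariski closure, but since you, like the paper, treat that case as a black box supplied by Serre or Masser, the real defect in your write-up is the specialization step.
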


\begin{definition} Let $m\geq 2$ be an integer and let $A$ be an abelian variety defined over $K$ and $P\in A(K)$ a rational point of infinite order. The point $P$ is {\it indivisible} by $m$ in $A(K)$ if, for any $n$ dividing $m$ and any point $Q\in A(K)$, the equality $nQ=P$ implies $n=\pm 1$.
\end{definition}

The following statement is proven by one of the authors in \cite{hiinv} (Lemme 14), relying on previous work of Ribet and on results of Faltings.
\begin{proposition}\label{kt} Let $A$ be an abelian variety defined over $K$ a finitely generated field of characteristic zero. There is a constant $c_A>0$ with the following property. Let $m$ and $n$ be two integers such that $m$ divides $n$. Let $P\in A(K)$ be a point of infinite order, indivisible by   $m$ and let $Q=\frac{1}{m}P$ be a point in $A(\bar{K})$ such that $mQ=P$ then
$$\left[K\left(\frac{1}{m}P\right):K\right]\geq \left[K\left(\frac{1}{m}P,A_n\right):K(A_n)\right]\geq c_Am.$$
\end{proposition}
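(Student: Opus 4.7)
The plan is to establish the two inequalities separately: the first by a tower--compositum argument, and the second by combining the standard Kummer construction with a Ribet--Faltings type large-image theorem. For the left inequality, note that $K\!\left(\frac{1}{m}P, A_n\right) = K\!\left(\frac{1}{m}P\right)\cdot K(A_n)$ is the compositum over $K$, so its degree over $K(A_n)$ divides $\left[K\!\left(\frac{1}{m}P\right):K\right]$, which yields the claim.

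For the right inequality, set $L := K(A_n)$, fix $Q \in A(\bar K)$ with $mQ = P$, and let $G := \gal(L(Q)/L)$. Define the Kummer map $\kappa : G \to A_m$ by $\kappa(\sigma) := \sigma Q - Q$. Since $m\sigma Q = \sigma P = P = mQ$, one has $\kappa(\sigma) \in A_m$; standard verifications show $\kappa$ is an injective group homomorphism and that its image $H := \kappa(G)$ is stable under the natural action of $\gal(L/K)$ on $A_m$ (indeed, stable under all of $\gal(\bar K / K)$, since $L = K(A_n) \supset K(A_m)$ as $m$ divides $n$). We are thus reduced to showing $|H| \geq c_A m$.

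Writing $m = \prod_{p \mid m} p^{a_p}$, the decomposition $A_m = \bigoplus_p A_{p^{a_p}}$ yields $H = \bigoplus_p H_p$ with each $H_p$ a Galois-stable subgroup of $A_{p^{a_p}}$. The indivisibility of $P$ by $m$ in $A(K)$ implies $P$ is indivisible by $p$ in $A(K)$ for every prime $p \mid m$. I would then invoke the Ribet-type Kummer theorem, extended to finitely generated base fields via Faltings' isogeny theorem (this is Lemme~14 of \cite{hiinv}): there is a constant $c_{A,p}$, equal to $1$ for all but finitely many $p$ (by Faltings' open image theorem in the relevant $p$-adic Mumford--Tate group), such that the index satisfies $[A_{p^{a_p}} : H_p] \leq c_{A,p}$. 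Consequently $|H_p| \geq p^{2g\,a_p}/c_{A,p} \geq p^{a_p}/c_{A,p}$, and taking the product over $p \mid m$ gives $|H| \geq m / \prod_p c_{A,p}$, which is $\geq c_A m$ with $c_A := \left(\prod_p c_{A,p}\right)^{-1} > 0$ a constant depending only on $A/K$.

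The main obstacle is the appeal in the third paragraph to the uniform Ribet--Faltings bound on the index $[A_{p^{a_p}} : H_p]$ over a finitely generated base. Both the existence of $c_{A,p}$ for each fixed $p$ and its uniform value $1$ for almost all $p$ rest on Faltings' isogeny theorem and the resulting openness of the image of the $p$-adic Galois representation; the translation of the indivisibility hypothesis in $A(K)$ into the exclusion of small Galois-stable subgroups $H_p$ of $A_{p^{a_p}}$ is the genuinely Kummer-theoretic part of the argument, and is the step one cannot do without some form of the isogeny theorem.
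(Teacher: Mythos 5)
The paper does not actually prove this proposition: it is quoted from Lemme 14 of \cite{hiinv}, whose proof (building on Ribet and on Faltings) is the intended reference. Your outer reductions are the standard ones and are correct: the first inequality is the compositum bound $[EF:F]\leq [E:K]$ (note the degree need not \emph{divide} $[E:K]$, since $K(\frac{1}{m}P)/K$ is not Galois in general, but only the inequality is used), and the passage to the injective, $\gal(\bar{K}/K)$-stable Kummer image $H\subseteq A_m$ over $L=K(A_n)$ is exactly how the cited proof begins.

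The gap is in your third paragraph. The statement you attribute to Ribet--Faltings, namely that the Kummer image $H_p$ has index at most $c_{A,p}$ in $A_{p^{a_p}}$ (with $c_{A,p}=1$ for almost all $p$), is false in general. Take $A=A_1\times A_2$ and $P=(P_1,0)$ with $P_1$ of infinite order and indivisible: then $P$ is indivisible by $m$ in $A(K)$, yet for $\sigma$ fixing $K(A_m)$ the element $\sigma Q-Q$ has vanishing second component, so $H\subseteq (A_1)_m\times\{0\}$ has index at least $m^{2\dim A_2}$ in $A_m$, which is unbounded; a similar failure occurs for simple $A$ with $\End(A)\neq\bbz$. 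What is true --- and is the actual content of Lemme 14 of \cite{hiinv} --- is the much weaker bound $|H|\geq c_A m$, essentially that $H$ contains an element of order at least $m/c$. Proving \emph{that} from $m$-indivisibility of $P$ is the genuinely hard step; it uses the classification of Galois-stable subgroups of $A_m$ via Faltings' theorems ($\End_{\gal}(T_\ell A)=\End(A)\otimes\bbz_\ell$ and semisimplicity), not an open-image statement in the Mumford--Tate group, which is not known in general. As written, your key step is therefore both misstated and circular: Lemme 14 of \cite{hiinv} \emph{is} the proposition under review, so invoking it there reduces your argument to the same citation the paper makes, dressed up with an incorrect intermediate claim. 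A smaller point: to control $H_p$ you need $P$ indivisible by $p^{a_p}$ (which does follow from $m$-indivisibility since $p^{a_p}\mid m$), not merely by $p$ as you state.
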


\section{Height Theory}\label{heightsection}

Let $A$ be an abelian variety defined over $K=k(B)$ as before and let $L$ be a symmetric ample line bundle on $A$. To the polarization defined by   $L$ we can associate a quadratic form, the N\'eron-Tate height $\hat{h}=\hat{h}_L:A(\bar{K})\rightarrow\bbr$ (see \cite{hs,lang}).

\subsection{The $K/k$-trace}\label{kakatrace}

Let us denote $A_0$ the $K/k$-trace of $A/K$ (for background on the Chow theory, see \cite{brco,lang}), this is an abelian variety $A_0$ defined over $k$ and equiped with a map $\tau:(A_0)_K\rightarrow A$, such that any morphism $\phi:B_K\rightarrow A$, where $B$ is defined over $k$, factors through $\tau$, i.e there is a $k$-morphism $\phi_0:B\rightarrow A_0$ such that $\phi=\tau\circ \phi_{0,K}$. Since we are working in characteristic zero, we may
safely identify $A_0$, or rather $(A_{0})_K$, with an abelian subvariety of $A$, namely $\tau(A_0)_K$.
In fact one may formulate variants of our results replacing the Mordell-Weil group $A(K)$ (resp. $\caa_t(k)$ with the {\it Lang-N\'eron group}  $A(K)/\tau A_0(k)$ (resp. $\caa_t(k)/A_0(k)$). The importance of the $K/k$-trace comes in part from the fact the N\'eron-Tate height is zero on $\tau(A_0)(k)$ so we get a positive definite quadratic form only on  $(A(K)/\tau A_0(k))\otimes\bbr$ (see \cite{brco,lang}). Nevertheless, by using the fact that $A$ is $K$-isogenous to $A_0\times A_1$, where $A_1/K$ is an abelian variety whose $K/k$-trace is trivial, we can most of the time reduce to the case where $A_0=0$.  

\medskip

For example, we restate here Silverman's specialisation theorem (Theorem \ref{specialise}, $(2)$). \begin{proposition} (Silverman \cite{sil83}) Let $k$ be a number field, $K=k(B)$ the function field of a curve defined over $k$ and $A$ an abelian variety defined over $K$. Let $U$ be the Zariski open subset of $B$ over which $A$ has good reduction, then except for finitely many  $t$ in $U(k)$, the specialisation map
$sp_t:A(k({C}))\rightarrow A_t(k)$ is injective.
\end{proposition}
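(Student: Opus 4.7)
The plan is to reduce the general statement to Silverman's original theorem, which is proved under the hypothesis that the $K/k$-trace of $A$ is trivial. By Poincar\'e complete reducibility over $K$, there is a $K$-isogeny
\[
\phi: A \longrightarrow (A_0)_K \times A_1,
\]
where $A_0$ is the $K/k$-trace of $A$ and $A_1/K$ has trivial $K/k$-trace; the kernel $N := \ker(\phi)$ is a finite group scheme over $K$. Shrinking $U$ if necessary so that $\phi$ extends to an isogeny of N\'eron models over $U$ (which is possible in characteristic zero), the specialization map commutes with $\phi$: any $P \in \ker(sp_t^A)$ satisfies $\phi(P) \in \ker(sp_t^{A_0}) \times \ker(sp_t^{A_1})$. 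Thus it suffices to control three exceptional loci separately: those for $sp_t^{A_1}$, for $sp_t^{(A_0)_K}$, and for $sp_t^A$ restricted to the finite group $N(K)$.

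For $A_1$, which has trivial trace, Silverman's original theorem gives a finite set $S_1 \subset U(k)$ off which $sp_t^{A_1}$ is injective. For the constant factor $(A_0)_K$, a $K$-point is a $k$-morphism $B \to A_0$, which by the universal property of the Albanese decomposes as
\[
(A_0)_K(K) \;=\; A_0(k) \,\oplus\, \Hom_k(\alb(B), A_0),
\]
a finitely generated abelian group. The map $sp_t^{(A_0)_K}$ acts as the identity on the first summand and evaluates Albanese homomorphisms at $t$ (relative to a fixed base point); since each nonzero such homomorphism has only finitely many zeros on $B$ and it suffices to work with a finite set of generators, $sp_t^{(A_0)_K}$ is injective for $t$ outside a finite set $S_0 \subset U(k)$. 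Finally, each nontrivial element of $N(K)$ is a nonzero torsion section of $A \to B$ meeting the zero section in only finitely many fibres, yielding a third finite set $S_N \subset U(k)$ off which $sp_t^A$ is injective on $N(K)$.

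Combining these, for $t \in U(k) \setminus (S_0 \cup S_1 \cup S_N)$ any $P \in \ker(sp_t^A)$ maps to $0$ under $\phi$, so lies in $N(K)$, and then vanishes. The main technical point is verifying that $\phi$ extends to a morphism of N\'eron models over a suitable open $U$ so that the specialization square commutes at every $t \in U(k)$; this is standard but requires taking $U$ inside the common locus of good reduction of $A$, $(A_0)_K$ and $A_1$ and using that $N$ spreads out to an \'etale group scheme, automatic in characteristic zero. One could also bypass the isogeny altogether and argue with N\'eron--Tate heights: $\hat h$ is positive definite on $\bigl(A(K)/\tau A_0(k)\bigr) \otimes \bbr$, so Silverman's original height comparison handles all non-torsion classes modulo the trace, while the trace part $\tau A_0(k)$ injects into each fibre.
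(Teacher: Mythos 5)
Your overall strategy is the same as the paper's: decompose $A$ up to isogeny as $(A_0)_K\times A_1$ with $A_0$ the $K/k$-trace and $A_1$ of trivial trace, apply Silverman's original theorem to $A_1$, handle the finite kernel of the isogeny via injectivity of specialisation on torsion (automatic in characteristic zero over the good-reduction locus), and treat the constant factor separately. The paper's own proof is exactly this, stated more tersely.

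The one step that does not hold up is your treatment of $(A_0)_K$. First, injectivity of a group homomorphism cannot be checked on a finite set of generators, so ``it suffices to work with a finite set of generators'' is not a valid reduction: an element of $\ker\bigl(sp_t^{(A_0)_K}\bigr)$ need not be a generator, and the union over all nonzero $f$ of the finite zero sets $\{t : f(t)=0\}$ can be infinite. Second, and more seriously, the claim itself fails whenever $\Hom_k(\alb(B),A_0)\neq 0$ and $B(k)$ is infinite: take $B=E$ an elliptic curve with $E(k)$ infinite and $A=(E)_K$ constant; then for every $t\in E(k)$ the morphism $x\mapsto x-t$ is a nonzero element of $(E)_K(K)$ killed by $sp_t$, so \emph{no} $t$ is good. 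The paper sidesteps this by asserting $A_0(K)\cong A_0(k)$, which is correct precisely when there are no nonconstant $k$-morphisms $B\to A_0$ --- e.g.\ when $B\cong\bbp^1_k$, the only case the paper actually uses --- but your own (correct) identification $(A_0)_K(K)=A_0(k)\oplus\Hom_k(\alb(B),A_0)$ shows that for a general base curve the statement must either assume $\Hom_k(\alb(B),A_0)=0$ or be reformulated for the Lang--N\'eron group $A(K)/\tau A_0(k)$. Your closing height-theoretic alternative has the same blind spot: it covers torsion, $\tau A_0(k)$, and the non-torsion classes modulo the trace, which exhausts $A(K)$ only when $(A_0)_K(K)=A_0(k)$.
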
 

\begin{proof} This follows readily from \cite{sil83}, except that Silverman assumes that the $K/k$-trace of $A$ is zero. To show that this hypothesis is not needed, observe first that the specialisation map (in the case of equicharacteristic zero) is always injective on torsion, then to deal with the infinite part of $A(k(B))$ we may replace $A$ by an  isogenous abelian variety and therefore assume that $A=A_0\times A_1$ where $A_0$ is the $K/k$-trace and $A_1$ has $K/k$-trace zero. The specialisation map is trivially injective on $A_0(K)\cong A_0(k)$ and is also injective on $A_1(k({C}))$ by Silverman's result.
\end{proof}

\subsection{Canonical heights on families of abelian varieties}

Assuming that the $K/k$-trace, is zero, the height $\hat{h}=\hat{h}_L$ is positive definite on $A(K)\otimes\bbr$. Following Moret-Bailly \cite{mb} we will use the following description of $\hat{h}$. Consider the N\'eron model $\can\rightarrow B$ of $A/K$ and $\caa$ a smooth compactification of $\can$. We pick a divisor $D$ such that $L=O_A(D)$. The line bundle $L$ extends to a line bundle $\call$ on $\caa$ which, after either tensoring with $\bbq$ (i.e. allowing $\call$ to live in to $\pic(\caa)\otimes\bbq$) or replacing $L$ by an adequate multiple $L^{\otimes N}$, we may assume to be cubical ({\it cf. loc. cit.}). If we denote $\cad_0$ the extension of $D$ to $\caa$ by Zariski closure, the line bundle $\call$ may be associated to a divisor $\cad=\cad_0+F$, where $F$ is a sum of components of special fibres of $\can$. For a point $P\in A(K)$, we denote $C_{P}$ the associated section of  $\can\rightarrow B$. The N\'eron-Tate height can then be computed via intersection in $\can$ (see \cite{mb}):
\begin{equation}\label{nth}
\hat{h}({P})=\cad\cdot C_P=\cad_0\cdot C_P+F\cdot C_P
\end{equation}
We note that if we denote $\cad_0'$, $F'$ the extension of $\cad_0$, $F$ to $\caa$ by Zariski closure, we can also write a variant of formula (\ref{nth}) with intersections on $\caa$ as $\hat{h}({P})=\cad'\cdot C_P=\cad_0'\cdot C_P+F'\cdot C_P$. We record what we will need in the following lemma.

\begin{lemma}\label{ntint} The N\'eron-Tate height of a point $P\in A(K)$ depends only on the intersection numbers of $C_P$ with $\cad$ and the components of special fibres of
$\can\rightarrow B$ (resp. with $\cad'$ and the components of special fibres of
$\caa\rightarrow B$).
\end{lemma}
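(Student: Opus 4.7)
The plan is to read the lemma essentially off of formula~(\ref{nth}), $\hat h(P)=\cad\cdot C_P=\cad_0\cdot C_P+F\cdot C_P$. What remains to verify is that the correction divisor $F$ can be expressed, with coefficients independent of $P$, as a fixed $\bbq$-linear combination of irreducible components of special fibres of $\can\to B$. I would argue this as follows: $\cad_0$ is canonical, being the Zariski closure of $D$; the cubical line bundle $\call$ extending $L$ is unique up to tensoring by bundles pulled back from $B$, and such pulled-back bundles correspond to divisors supported on special fibres. Hence $F=\cad-\cad_0$ equals a fixed $\sum_i a_i\Gamma_i$ with $a_i\in\bbq$ and $\Gamma_i$ ranging over irreducible components of special fibres, the $a_i$ depending only on the choice of $\call$ and in particular not on $P$.

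Substituting this back into (\ref{nth}) yields
$$\hat h(P)=\cad_0\cdot C_P+\sum_i a_i\,(\Gamma_i\cdot C_P)=\cad\cdot C_P,$$
a universal $\bbq$-linear expression in the intersection numbers $\cad\cdot C_P$ (or equivalently $\cad_0\cdot C_P$) together with the $\Gamma_i\cdot C_P$, which is exactly the first assertion. For the variant on the smooth compactification $\caa$, I would take $\cad_0'$ and $F'$ to be the Zariski closures of $\cad_0$ and $F$ in $\caa$; any discrepancy between pulling $\cad$ back to $\caa$ and the sum $\cad_0'+F'$ is supported on $\caa\setminus\can$, which itself lies above finitely many closed points of $B$ and is therefore contained in the special fibres of $\caa\to B$. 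The same linear formula then holds with $\cad_0'$, $F'$ and components of special fibres of $\caa\to B$ in place of the unprimed ones.

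The essentially unique obstacle is the step identifying $F$ as a fixed linear combination of special-fibre components with coefficients independent of $P$: this is precisely where Moret-Bailly's cubical theory \cite{mb} enters, since it is the rigidity provided by the cubical structure on $\call$ that pins down the coefficients $a_i$ intrinsically, so that all $P$-dependence passes through the intersection numbers themselves.
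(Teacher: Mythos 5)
Your argument is correct and follows exactly the paper's route: the paper gives no separate proof of Lemma \ref{ntint}, simply recording it as an immediate consequence of Moret-Bailly's formula~(\ref{nth}) with $\cad=\cad_0+F$, $F$ a fixed combination of special-fibre components. Your elaboration of why the coefficients of $F$ are independent of $P$ (rigidity of the cubical extension up to pullbacks from $B$) and of the passage from $\can$ to $\caa$ is a faithful filling-in of the details the paper leaves implicit.
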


\begin{remark} When $A/K$ is the Jacobian  of a curve $X/K$, it is possible to give a more explicit version of Lemma \ref{ntint} in terms of the configuration of non-reducible fibres of the fibration $\mathcal{X}\rightarrow B$; this is worked out in \cite{shio2,shio3}.

\end{remark}

\subsection{The canonical height and rationally equivalent curves}

Let $\caa$ be as before and $C$ a curve in $\caa$. Suppose that there is a natural number $n$ such that $[n]C=C_0$, where $C_0$ is a section of $\pi: \caa\rightarrow B$. We show that the height of $C_0=[n](C)$ is constant  when $C$ varies in a pencil of curves.

\begin{lemma}\label{numclass}
Let $C$, $C'$ be two members of a pencil $\{C_t\}_{t\in \mathbb{P}^1}$  of curves  in $\caa$. Assume such that there is an integer $n\geq 1$ such that  $[n]C$ and $[n]C'$ are  sections of $\pi$,  and let $P_0$ and $P_0'$ be the corresponding points in $\caa(K)$, then $h(P_0)=h(P_0')$.
\end{lemma}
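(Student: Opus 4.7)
The strategy is to apply Lemma \ref{ntint} and reduce the claim to an equality of intersection numbers: I want to show that the two sections $C_{P_0}=[n]C$ and $C_{P_0'}=[n]C'$ have the same intersection number with $\cad$ and with every component of every special fibre of $\pi:\caa\to B$. The rank-one linear system $\{C_t\}_{t\in\bbp^1}$ makes $C$ and $C'$ rationally equivalent as $1$-cycles on $\caa$; in particular $C\cdot E=C'\cdot E$ for every divisor class $E$ on $\caa$.

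Intersecting with a smooth fibre $F$ gives $d:=C\cdot F=C'\cdot F$, so both $C$ and $C'$ are multisections of $\pi$ of the same degree $d$. Since $[n]C$ and $[n]C'$ are sections of $\pi$, the induced morphisms $[n]|_C:C\to [n]C$ and $[n]|_{C'}:C'\to[n]C'$ both have degree $d$ (the composition with the projection to $B$ recovers the degree-$d$ multisection structure, and projection to a section is of degree one). In particular, as $1$-cycles,
$$[n]_*C=d\cdot [n]C\qquad\text{and}\qquad [n]_*C'=d\cdot [n]C'.$$

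Now I would apply the projection formula for the multiplication-by-$n$ morphism $[n]$. For any divisor $E$ on $\caa$,
$$[n]C\cdot E=\tfrac{1}{d}\,[n]_*C\cdot E=\tfrac{1}{d}\,C\cdot[n]^*E,$$
and symmetrically for $C'$. Because $C$ and $C'$ are numerically equivalent we have $C\cdot[n]^*E=C'\cdot[n]^*E$, hence $[n]C\cdot E=[n]C'\cdot E$. Taking $E=\cad$ and then each component of each special fibre of $\pi$, Lemma \ref{ntint} gives $h(P_0)=h(P_0')$.

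The main obstacle is that $[n]$ is a morphism on the N\'eron model $\can$ but only a rational map on the smooth compactification $\caa$, so the projection formula has to be used with care near the indeterminacy locus and near the fibres added to pass from $\can$ to $\caa$. A clean way out is to work with the $\can$-version of Lemma \ref{ntint}: one computes $\hat h(P)=\cad\cdot C_P$ via intersections on $\can$ with $\cad_0$ and with the finitely many components $F$ of special fibres of $\can\to B$, a setting in which $[n]:\can\to\can$ is a genuine morphism of group schemes over $B$. One may also need to replace $C,C'$ by their proper transforms on a suitable resolution $\widetilde\caa\to\caa$ on which $[n]$ becomes a morphism; the intersection numbers entering Lemma \ref{ntint} are unchanged by such a modification, and the argument above then proceeds verbatim.
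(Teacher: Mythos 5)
Your strategy --- reduce to an equality of intersection numbers via Lemma \ref{ntint} and transport the numerical equivalence of $C$ and $C'$ through $[n]$ by the projection formula --- is a genuinely different route from the paper's. The paper instead pushes forward the \emph{rational} equivalence: writing $C'=C+\divv(f)$ on the surface $S$ swept out by the pencil, it applies $[n]$ on its domain of definition $\cau$, invokes Fulton's norm formula to get $[n]_*\divv(f)=\divv(g)$ on $S'=[n](S)$, and observes that the discrepancy coming from $\caa\setminus\cau$ is supported in dimension $0$ of the surface $S'$ and hence vanishes as a $1$-cycle; it then concludes that $C_0$ and $C_0'$ are numerically equivalent and invokes Lemma \ref{ntint} exactly as you do. Your identification $[n]_*C=d\cdot[n]C$ with $d=C\cdot F$ is correct, and when $C$ and $C'$ lie inside $\cau$ your computation $d\cdot([n]C\cdot E)=C\cdot[n]^*E$ is clean (note that $[n]^*E$ is a well-defined class on $\caa$ because $\caa$ is smooth and $\codim(\caa\setminus\cau)\geq 2$, so $\pic(\cau)\cong\pic(\caa)$); in that case your argument is shorter than the paper's.

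The soft spot is your last paragraph: neither proposed fix for the indeterminacy of $[n]$ works as stated. Working on $\can$ is problematic because $\can\to B$ is not proper, the curves $C$, $C'$ need not be contained in $\can$, and truncating them to $\can$ changes the very intersection numbers you are computing; the projection formula for the non-proper map $[n]:\can\to\can$ would itself need justification. Working on a resolution $\rho:\widetilde{\caa}\to\caa$ on which $[n]$ lifts to a morphism $\widetilde{[n]}$, the projection formula does hold and $\widetilde{[n]}_*\widetilde{C}=d\cdot[n]C$, but now $\widetilde{[n]}^*E=\rho^*\bigl([n]^*E\bigr)+\sum_i a_i\cae_i$ with exceptional corrections, and the proper transforms $\widetilde{C}$, $\widetilde{C'}$ of two numerically equivalent curves are \emph{not} numerically equivalent on $\widetilde{\caa}$ in general: they differ by exceptional contributions governed by the multiplicities of $C$ and $C'$ along the centre of $\rho$. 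So ``the argument proceeds verbatim'' is precisely the step that fails; you would still have to show $\sum_i a_i(\widetilde{C}\cdot\cae_i)=\sum_i a_i(\widetilde{C'}\cdot\cae_i)$. The paper sidesteps this because its whole computation lives on the surfaces $S$ and $S'$, where the bad locus has dimension $0$ and cannot support a nonzero $1$-cycle. To repair your proof, either restrict to members of the pencil that avoid $\caa\setminus\cau$ (which suffices for the application in Lemma \ref{prpimportante}, where only almost all members of the pencil matter), or replace the final paragraph by the paper's surface-level pushforward of the rational equivalence.
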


\begin{proof} We start by noting that the morphism $[n]$ extend to a morphism $[n]_{\can}:\can\rightarrow \can$, but not on the whole of $\caa$. Let us denote $\cau$ the domain of $[n]$, i.e. the largest open subset of $\caa$ such that $[n]$ extends to a morphism $[n]_{\cau}:\cau \rightarrow\caa$. Notice that, since $\caa$ is smooth and projective, the codimension of $Z:=\caa\setminus\cau$ is at least 2.

Let $S$ be the surface containing the pencil of curves and, in particular $C$ and $C'$. There is a rational function $f$ on $S$ such that, as $1$-cycles on $S$, we have $C'=C+\divv(f)$. Restricting to $S\cap\cau$ and applying $[n]$ we get $[n](C'\cap\cau)=[n](C\cap\cau)+[n](\divv(f))_{S\cap\cau}$. Now the Zariski closure of $[n](C'\cap\cau)$ (resp. $[n](C\cap\cau)$) is what we denoted $C'_0=[n]({C'})$ (resp. $C_0=[n]({C}))$. By proposition 1.4 in \cite{fu}, the image of 
$(\divv(f))_{S\cap\cau}$ by $[n]$ is a divisor $(\divv(g))_{[n](S\cap\cau)}$ for some function $g$ (in fact the norm of $f$, the norm being taken from the finite extension of corresponding function fields). We thus get, as $1$-cycles on $S'=[n](S)$ the equality
$C_0'=C_0+\overline{\divv(g))_{[n](S\cap\cau)}}$, from which we see that $C_0'=C_0+\divv(g)_{S'}+Z$ with $Z$ a $1$-cycle supported on $S'\setminus\cav$. But notice that the complement of $\cav$ has codimension at least 2, hence $Z=0$ and we have shown that there exists a surface $S'$ containing $C_0$ and $C_0'$ and a rational function $g$ on $S'$ such that:
$$C_0'=C_0+\divv(g).$$
Now $C_0$ and $C_0'$ are rationally, hence numerically equivalent and hence, by Lemma \ref{ntint}, the N\'eron-Tate heights of the associated points $P_0$, $P_0'$ are equal.
\end{proof}
 
\begin{remark} For elliptic surfaces over $\bbp^1$ linear equivalence tensored with $\bbq$ is the same as numerical equivalence, thus the previous lemma can be stated for numerically equivalent curves, as in \cite{sal}. We do not know if this generalisation holds in the higher dimensional case.
\end{remark}

\section{A key lemma}\label{key}

The following setting will be key to our results. Let $\pi:\caa \rightarrow B$ be a fibration of abelian varieties defined over a number field $k$. Consider a smooth projective curve $C$ and a map $i:C\rightarrow\caa$ such that $\psi:=\pi\circ i$ is a finite morphism, we denote $\pi_C:\caa_C\rightarrow C$ the new 
fibration of abelian varieties obtained by desingularizing the fibered product $\caa\times_{\psi,B}C$ and the obvious map. The fibered product is the scheme (algebraic variety)  fitting in the diagram  
\begin{eqnarray}
\xymatrix{ 
&  \caa_C\ar[d] &\\ &  \caa\times_{\psi,B}C \ar[dr]^{\pi_C} \ar[dl]_{\pi_1} 
& 
\\ \caa \ar[dr]_{\pi} & &C \ar[dl]^{\psi} \\
 & B  &}
\end{eqnarray}
and such that any pair of maps $f_1:X\rightarrow\caa$ and $f_2:X\rightarrow C$ such that $\pi\circ f_1=\psi\circ f_2$ induces a map $f:X\rightarrow\caa\times_BC$ such that $f_1=\pi_1\circ f$ and $f_2=\pi_C\circ f$. If further $X$ is a smooth curve, the map $ X\buildrel{f}\over{\rightarrow}\caa\times_BC\dashrightarrow\caa_C$ extends to a morphism $\tilde{f}:X\rightarrow\caa_C$.
Any section $\sigma: B\rightarrow\caa$ (i.e. a map such that $\pi\circ \sigma=id_B$), induces a section $\tilde{\sigma}:C\rightarrow \caa_C$. We thus get an injection
$A(k(B))=\caa(B)\rightarrow\caa_C(k({C}))$. We call {\it old sections} the sections of $\caa_C\rightarrow C$ obtained in this way. The maps $i:C\rightarrow\caa$ and $\psi:C\rightarrow B$ induce also a section $\sigma_C: C\rightarrow\caa_C$. If $\psi$ is of degree $\geq 2$, i.e. if $i({C})$ is {\it not} the image of a section, we call $\sigma_C$ a {\it new section}, or the new section induced by $C$ (and $\psi$). 

Now, the new section is not necessarily linearly independent of the group of old sections: it does not belong to the group of old sections but a positive multiple could belong to the group of old sections. The following key lemma provides a criterion which guarantees this does not happen too often, this lemma and its proof  are parallel to Proposition 4.2 in \cite{sal}. 

 \begin{lemma}\label{prpimportante}
 Let $\caa \rightarrow B$ be an abelian fibration defined over a number field $k$, whose fibres are abelian varieties of dimension $g$. Let $\cac$ be a pencil of curves inside $\caa$, no member being contained in a fiber. There exist an $n_0=n_0(\cac) \in \mathbb{N}$ and a finite subset $\Sigma_0=\Sigma_0(\cac)\subset Sec(\caa)$ such that for $C$ in the pencil $\cac$, the new section induced by $C$
 is linearly dependent of the old ones if, and only if, $[n]C \in \Sigma_0$ for some $n\leq n_0$.  
 \end{lemma}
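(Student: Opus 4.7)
The plan is to convert the linear dependence of the new section $\sigma_C$ on the old sections into a geometric divisibility statement, bound the divisibility index by Kummer theory, and use height theory to cut out only finitely many possible old sections.

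\emph{Step 1 (translation).} Identifying $\caa_C(k(C))$ with $A(k(C))$ and the group of old sections with $A(K)\subset A(k(C))$, the new section $\sigma_C$ is linearly dependent on the old ones if and only if $n\sigma_C\in A(K)$ for some integer $n\geq 1$. Unwinding the definitions, $n\sigma_C$ coincides with the pullback of a section $s:B\to\caa$ if and only if $[n]\circ i:C\to\caa$ factors through $\psi$, i.e., if and only if the image curve $[n]C\subset\caa$ is a section of $\pi$. Hence linear dependence is equivalent to: for some integer $n\geq 1$, the curve $[n]C$ is a section of $\pi$.

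\emph{Step 2 (bounding $n$).} Let $d$ denote the common degree of $\psi:C\to B$ as $C$ runs over the pencil $\cac$. By Lemma \ref{serre}, there is a uniform bound $M$ on $|A(k(C))_{\mathrm{tors}}|$ valid for all $C\in\cac$. Assume $\sigma_C$ is linearly dependent on old sections and take $n$ minimal with $P:=n\sigma_C\in A(K)$. Any factorisation $P=n'R$ with $n'\mid n$, $n'>1$ and $R\in A(K)$ forces the torsion point $(n/n')\sigma_C-R\in A[n'](k(C))$ to be nonzero (by minimality of $n$) and to have order exactly $n'$ (again by minimality combined with Lemma \ref{serre}), so that $n'\leq M$. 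A primitivity bookkeeping then replaces the triple $(n,\sigma_C,P)$ by a triple $(m,Q,P_0)$, with $Q\in A(k(C))$, $P_0\in A(K)$ primitive modulo torsion, $mQ$ equal to $P_0$ modulo a torsion point of bounded order, $m\mid n$ with cofactor bounded in terms of $M$, and $P_0$ indivisible by $m$ in $A(K)$. Proposition \ref{kt} applied to this last pair yields
\[
c_A\cdot m \leq [K(Q):K]\leq [k(C):K]=d,
\]
hence $n\leq n_0$ for a constant $n_0=n_0(\cac)$ depending only on $d$, $M$ and $c_A$.

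\emph{Step 3 (finiteness of $\Sigma_0$).} For each integer $n\leq n_0$, as $C$ varies in $\cac$, the curves $[n]C$ form a family of curves in $\caa$ parametrised by the base of the pencil $\cac$; whenever $[n]C$ happens to be a section of $\pi$, the corresponding point $P\in A(K)$ has N\'eron--Tate height depending only on $n$ (not on $C$) by Lemma \ref{numclass}, and we denote this common value $h_n$. Since $A(K)$ is finitely generated, Northcott's theorem yields finitely many $P\in A(K)$ with $\hat h(P)=h_n$, and the union over $n\in\{1,\ldots,n_0\}$ of the associated sections defines the desired finite set $\Sigma_0\subset\mathrm{Sec}(\caa)$. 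The converse direction is immediate: if $[n]C\in\Sigma_0$ with $n\leq n_0$, the curve $[n]C$ is a section of $\pi$, so $n\sigma_C$ is an old section and $\sigma_C$ is linearly dependent on the old ones.

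The main technical obstacle lies in Step 2: minimality of $n$ does not by itself make $P=n\sigma_C$ indivisible in $A(K)$, since torsion in $A(k(C))$ can produce nontrivial factorisations of $P$. One must combine the uniform torsion bound from Lemma \ref{serre} with the careful primitivity reduction sketched above before Proposition \ref{kt} becomes applicable and delivers a bound on $n$ depending only on the pencil.
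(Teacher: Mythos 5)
Your proposal follows essentially the same route as the paper's proof: translate linear dependence of $\sigma_C$ into the statement that $[n]C$ is a section for some $n\geq 1$, bound $n$ by Kummer theory (Lemma \ref{serre} to control the torsion obstruction, Proposition \ref{kt} for the indivisible part, using that the relevant field degree is at most $\deg(\psi)$, which is constant in the pencil), and deduce finiteness of $\Sigma_0$ from Lemma \ref{numclass} together with Northcott on the finitely generated group $A(K)$. The one inequality to tighten is $[K(Q):K]\leq[k(C):K]$: the auxiliary point $Q$ obtained by adjusting by a torsion point $T'$ need not lie in $A(k(C))$, so one should instead bound $[K(Q,A_{m'}):K(A_{m'})]=[K(\sigma_C,A_{m'}):K(A_{m'})]\leq[K(\sigma_C):K]\leq d$, exactly as the paper does.
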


\begin{proof}  Suppose $[n]C= C_0$ for some section $C_0$. We may assume such $n$ to be minimal i.e., there does not exist $n'<n$ such that the curve $[n']C$ is a section. 
The proof is divided in two parts:
\begin{itemize}
\item[1)] Bounding $n$ from above using Kummer theory (see Section \ref{kummer}).
\item[2)] For a fixed $n$ such that $C_0=[n]C$, showing that the  N\'eron-Tate height of such a section is bounded in terms of $n$ and the
pencil $\cac$ (see Section \ref{heightsection}).
\end{itemize}

\textbf{1) Bounding n:} We define the degree of a curve in $\caa$ by its intersection number with a fiber:
\[ \mathrm{deg}(C)= (C.F).\] 

 If $C_0$ is a section we have $\mathrm{deg}(C_0)=(C_0.F)=1$. The degree of $C$, which will be denoted by $h$, is fixed when $C$ belongs to a given  numerical class and {\it a fortiori} when $C$ moves in a pencil.
 
The map $[n]$ is not a morphism defined on the whole variety $\caa$, but on an open set $\cau \subseteq \caa$ that contains $\can$ the N\'eron model (say) but which excludes part of the boundary of $\can$ contained in the fibres with bad reduction. Since sections do not intersect this boundary, they are contained in $\cau$. This allows us to write:
  \[\mathrm{deg}([n]^{-1}C_0)= (([n]^{-1}C_0).F)= n^{2g}(C_0.F)= n^{2g}.\]

 Thus, $\lim_{n \to \infty}\deg [n]^{-1}(C_0) = \infty$. But notice that $C$ is an irreducible component of $[n]^{-1}(C_0)$; we now use Kummer theory (see Section \ref{kummer}) to show that $[n]^{-1}(C_0)$ cannot have components of small degree if $n$ is large.
 
 Denote by $K$ the field $k(B)$, by $A$ the generic fibre of $\caa$ and by $P_0$ the point in $A(K)$ corresponding to the section $C_0$.  Let $P\in A(\bar{K})$ be such that \[ [n] P=P_0 \] where $n$ is minimal with respect to the expression above. We now show that $n$ is bounded by a constant $n_0$ that depends only on $A$, $K$.
 
  Note first that if $P$ is a torsion point of order $m$, we know by Lemma \ref{serre} that  $ c_A.m^{\gamma}\leq [K(P):K]= h$, and $m$ is therefore bounded. 
    
  Now suppose $P$ is of infinite order. Let $m$ be the smallest positive integer such that there exist $P_1 \in A(K)$ and $T$ a torsion point satisfying \[ [m] P=P_1 + T.  \]
 We claim that $P_1$ is indivisible by $m$. If $l$ is a divisor of $m$ such that $[l]Q = P_1$ with $Q \in A(K)$, then \[[l][\frac{m}{l}]P=[l]Q +T\] and hence there exists a torsion point $T_1$ such that \[[\frac{m}{l}]P= Q+T_1.\] By the minimality of $m$ with respect to the equation above we must have $l=\pm 1$. 
 
  Let $m'=mm_1$ where $m_1$ is the order of the torsion point $T$. Let $T'$ be such that $[m]T' = T$ and put 
$P' = P + T'$ ; then $[m]P' = P_1$ and $T' \in A_{m'}$ . Applying Proposition \ref{kt} we obtain
  \begin{equation}\label{kum}  
   [K(P',A_{m'}): K(A_{m'})] \geq m.f_1.
  \end{equation}
  Now, note that $K(P, A_{m'} ) = K(P' , A_{m'} )$ hence \[ h=[K(P ) : K] 
\geq [K(P, A_{m'} ): K(A_{m'} )] = [K(P' , A_{m'} ): K(A_{m'} )] \geq f_1 .m,\] 
and thus m is bounded. Since $T$ is defined over $K(P )$, its order $m_1$ is also bounded in terms of $h$, and thus $[mm_1] P = 
m_1 P_1\in A(K)$ with $n\leq mm_1$ bounded as announced.

  \textbf{2) The equivalence class of a section:}
  Let now $C$ be a curve  member of a pencil $\cac$. For a fixed $n$, if $[n]({C})$  is a section, say $C_0$, then, by Lemma \ref{numclass},  its N\'eron-Tate height only depends on the pencil and thus the section belongs to a finite set $\Sigma_0$ as announced.
  \end{proof} 
  
  \medskip  

The following corollaries are immediate consequences of Lemma \ref{prpimportante} and our setting. 

\begin{corollary}\label{principal}
Let $\pi:\caa\rightarrow B$   be an abelian fibration and suppose $\caa$ contains a $k$-unirational surface $S$ not contained in a fibre of $\pi$. Then there exists a $k$-rational curve $C$ and a finite morphism $\phi:C\rightarrow B$ such that the base change raises the rank; in symbols, denoting $\caa_C=\caa\times_BC$, we have:
\begin{equation}
\rk\caa_C\left(k(C)\right)\geq \rk \caa\left(k(B)\right)+1.
\end{equation}
\end{corollary}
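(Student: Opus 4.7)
The plan is to exploit the $k$-unirationality of $S$ to manufacture a pencil $\cac$ of $k$-rational curves inside $S\subset\caa$ to which Lemma \ref{prpimportante} applies, and then to extract a generic member of $\cac$ whose associated new section is linearly independent of the old ones.

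First I would pick a dominant $k$-rational map $f:\bbp^2_k\dashrightarrow S$ (provided by the $k$-unirationality of $S$), choose a $k$-rational point $O\in\bbp^2(k)$ outside the indeterminacy locus of $f$, and consider the pencil of lines $\{\ell_t\}_{t\in\bbp^1_k}$ through $O$. Pushing forward by $f$ and taking Zariski closures produces a pencil $\cac=\{C_t\}_{t\in\bbp^1_k}$ of $k$-rational curves sweeping out $S$. Since $S$ is not contained in any fibre of $\pi$, the restriction $\pi|_S$ is dominant onto the curve $B$; for a suitably generic choice of $O$, the composition $\pi\circ f:\bbp^2\dashrightarrow B$ does not factor through the linear projection from $O$, so no member of $\cac$ is contracted by $\pi$ and each $\psi_t:=\pi\circ i_t\colon C_t\to B$ is finite. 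This places us in the hypotheses of Lemma \ref{prpimportante}.

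Next I would apply Lemma \ref{prpimportante} to $\cac$ to obtain an integer $n_0$ and a finite set $\Sigma_0$ of sections of $\pi$ such that the new section $\sigma_{C_t}$ is linearly dependent on the old ones if and only if $[n]C_t\in\Sigma_0$ for some $n\leq n_0$. For each pair $(n,\sigma)$ with $n\leq n_0$ and $\sigma\in\Sigma_0$, the preimage $[n]^{-1}(\sigma)$ is a $1$-dimensional closed subscheme of $\caa$ of degree $n^{2g}$ over $B$, hence a finite union of curves, and so meets the $1$-parameter family $\cac$ in only finitely many members. Since $\Sigma_0\times\{1,\dots,n_0\}$ is finite, only finitely many $t\in\bbp^1(k)$ will be ``bad''. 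Picking any other $t$ yields a $k$-rational curve $C=C_t$ for which the new section in $\caa_C(k(C))$ is independent of the injected subgroup coming from $\caa(k(B))$, whence $\rk\caa_C(k(C))\geq\rk\caa(k(B))+1$ as required.

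The main obstacle is the genericity argument at the construction stage: one must simultaneously ensure that the generic member of the pencil is $k$-rational (immediate from $\ell_t\cong\bbp^1_k$), is not contracted into a fibre of $\pi$ (a dimension-theoretic argument exploiting that $\pi\circ f$ does not factor through the pencil projection for generic $O$), and that its multiples avoid the finitely many exceptional curves flagged by Lemma \ref{prpimportante}. Once these three generic conditions are in place, the rank-raising conclusion is essentially formal from the key lemma.
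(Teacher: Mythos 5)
Your proposal is correct and takes essentially the same route as the paper: the paper's own (two-sentence) proof likewise takes a dominant map $\bbp^2\dashrightarrow S$, forms the pencil of images of a pencil of lines, and invokes Lemma \ref{prpimportante} to conclude that almost all members of the pencil yield a new independent section. Your additional genericity checks (no member contracted into a fibre, only finitely many members lying in $[n]^{-1}(\sigma)$ for $n\leq n_0$, $\sigma\in\Sigma_0$) are exactly the details the paper leaves implicit.
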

\begin{proof}
By hypothesis there exists a dominant map defined over $k$, say $\bbp^2 \dashrightarrow S$. This allows us to construct a pencil of rational curves on $S$ (e.g. take the image of a pencil of lines on $\bbp^2$) and, applying Lemma \ref{prpimportante}, we see that for almost all $C$ in this pencil, choosing the base change given by $C\hookrightarrow S\rightarrow B$, the new section associated to $C$ will be independent of the old sections and the rank will be raised by (at least) one.
\end{proof}

 \begin{corollary}\label{cor2} Let $\pi:\caa\rightarrow B$ be as in Corollary \ref{principal}, there exists infinitely many $t\in B(k)$ such that:
\begin{equation}
\rk\caa_t\left(k\right)\geq \rk \caa\left(k(B)\right)+1.
\end{equation}
\end{corollary}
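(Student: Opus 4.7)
The plan is to deduce Corollary \ref{cor2} from Corollary \ref{principal} by a single application of Silverman's specialisation theorem on the new base curve $C$. By Corollary \ref{principal}, there exists a $k$-rational curve $C$ together with a finite morphism $\phi : C \rightarrow B$ such that
$$\rk \caa_C(k(C)) \geq \rk \caa(k(B)) + 1.$$
Since $C \cong \bbp^1_k$, the set $C(k)$ is infinite (in fact Zariski dense in $C$).

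Next, I would apply Silverman's specialisation theorem (Theorem \ref{specialise}(2), in the form stated in Section \ref{kakatrace} that dispenses with the trivial-trace hypothesis) to the family $\pi_C : \caa_C \rightarrow C$. This provides a finite set $S_0 \subset C(k)$ such that for every $s \in C(k) \setminus S_0$ the specialisation map $i_s : \caa_C(k(C)) \rightarrow (\caa_C)_s(k)$ is injective; consequently
$$\rk (\caa_C)_s(k) \geq \rk \caa_C(k(C)) \geq \rk \caa(k(B)) + 1.$$

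It remains to transfer the lower bound back to fibres of the original family $\caa \rightarrow B$. Away from a finite set of points of $B$ (the branch locus of $\phi$ together with the finite set above which $\caa \rightarrow B$ has bad reduction or the desingularisation $\caa_C \rightarrow \caa \times_B C$ alters the fibre), one has a canonical isomorphism $(\caa_C)_s \cong \caa_{\phi(s)}$ of abelian varieties. Enlarging $S_0$ by the preimage under $\phi$ of this finite set yields a finite set $S \subset C(k)$ such that for every $s \in C(k)\setminus S$,
$$\rk \caa_{\phi(s)}(k) \geq \rk \caa(k(B)) + 1.$$
Because $\phi$ is finite, each $t \in B(k)$ has at most $\deg(\phi)$ preimages, so $\phi(C(k)\setminus S)$ is an infinite subset of $B(k)$, producing the claimed infinite family of points.

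The step requiring most care is the comparison between the fibres of $\caa_C \rightarrow C$ and those of $\caa \rightarrow B$: the desingularisation introduced in the definition of $\caa_C$ a priori modifies the special fibres above branch points of $\phi$, but since the two models agree generically on $B$ and the modifications occur on a proper closed subset, this comparison only fails for finitely many $s \in C(k)$ and can be absorbed into the exceptional set $S$. Once this is in place the conclusion is immediate.
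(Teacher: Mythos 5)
Your proof is correct and follows essentially the same route as the paper: apply Silverman's specialisation theorem to the base-changed family $\caa_C\rightarrow C$ and push the conclusion down via the finite map $\phi$, using that $\phi(C(k)\setminus S)$ is infinite. Your extra care about identifying $(\caa_C)_s$ with $\caa_{\phi(s)}$ only away from a finite exceptional set is a point the paper glosses over with ``clearly'', but it changes nothing of substance.
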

\begin{proof} We apply Silverman's specialisation theorem (Theorem \ref{specialise}) to the family $\caa_C$ provided by the previous corollary and get that, for almost $u\in C(k)$ we have
$$\rk (\caa_C)_u(k)\geq \rk\caa_C(k(C))\geq \rk\caa(k(B))+1$$
But clearly $(\caa_C)_u=\caa_{\phi(u)}$ and thus we get for all $t=\phi(u)\in\phi(C(k))$ that:
$$\rk\caa_{t}(k)\geq \rk\caa(k(B))+1.$$
\end{proof}

\begin{remark}
These corollaries clearly imply Theorems \ref{theo1} and \ref{theo1bis}.

\end{remark}

\section{Families of Jacobians}\label{famille}
 
We use Milne's survey {\it Jacobian varieties} \cite{mijv} as reference.  Since we assume (for simplicity) that a curve $C$ defined over a field $K$ possesses a $K$-rational point $P_0$, we may identify its jacobian $\mathrm{Jac}({C)}$ with  $\pic^0({C)}$ and the jacobian embedding $\varphi: C\rightarrow \mathrm{Jac}({C)}$ with the map $P\mapsto \cl\left(P-P_0\right)$.

\begin{proposition}
Let $K$ be a field and $C$ a smooth projective curve of genus $g\geq 1$ defined over $K$ such that $C(K) \neq \emptyset$. Then the \textit{canonical} map $\varphi: C \rightarrow \mathrm{Jac}(C)$ is an injection. In particular, $\varphi(C)$ is a smooth projective curve. Moreover $\varphi(C)$ is not contained in any proper abelian subvariety of $\mathrm{Jac}(C)$.
\end{proposition}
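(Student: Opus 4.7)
The plan is to treat the three assertions separately, using only elementary facts about divisors on curves and the universal property of the Jacobian. For injectivity of $\varphi$, I would suppose $\varphi(P) = \varphi(Q)$ for two geometric points $P, Q \in C(\bar{K})$; by definition this means $P - P_0 \sim Q - P_0$ as divisors, so $P \sim Q$ on $C$. If $P \neq Q$, then $h^0(C, \mathcal{O}(P)) \geq 2$, which yields a non-constant morphism $C \to \mathbb{P}^1$ of degree one, forcing $C \cong \mathbb{P}^1$ and contradicting $g \geq 1$.

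Next, to see that $\varphi(C)$ is a smooth projective curve I would show that $\varphi$ itself is a closed immersion. For $g = 1$ the map $\varphi$ is the canonical identification of $C$ with $\mathrm{Jac}(C)$, so there is nothing to check. For $g \geq 2$, injectivity on points is already established, and it remains to verify that the differential $d\varphi_P$ is injective at every $P \in C$. Under the canonical identification $T_0 \mathrm{Jac}(C) \cong H^0(C, \Omega^1_C)^{\vee}$, this amounts to the surjectivity of the evaluation map $H^0(C, \Omega^1_C) \to \Omega^1_{C,P} \otimes \kappa(P)$, equivalently to the strict inequality $h^0(K_C - P) < h^0(K_C)$, which follows from Riemann--Roch as soon as $g \geq 1$. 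Combined with injectivity, this makes $\varphi$ a closed immersion, so $\varphi(C) \cong C$ is in particular a smooth projective curve.

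For the last claim, I would suppose $\varphi(C) \subseteq A'$ for some abelian subvariety $A' \subseteq \mathrm{Jac}(C)$. The morphism $\varphi : C \to A'$ sends $P_0$ to $0$, so by the universal property of the Jacobian (as the Albanese variety of $(C, P_0)$) there exists a unique homomorphism $u : \mathrm{Jac}(C) \to A'$ such that $u \circ \varphi$ recovers $\varphi : C \to A'$. Composing $u$ with the inclusion $A' \hookrightarrow \mathrm{Jac}(C)$ yields an endomorphism of $\mathrm{Jac}(C)$ extending $\varphi$, which by the uniqueness in the universal property applied to $\varphi$ itself must equal the identity; therefore $A' = \mathrm{Jac}(C)$. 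The only step with any real content is the differential calculation in the second paragraph, but once the identification $T_0 \mathrm{Jac}(C) \cong H^0(C,\Omega^1_C)^{\vee}$ is granted this reduces to a one-line Riemann--Roch input; the remaining assertions are formal consequences of injectivity and of the universal property of the Jacobian.
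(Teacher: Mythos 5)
Your proof is correct. The paper itself gives no proof of this proposition, stating it as a standard fact with Milne's survey \emph{Jacobian varieties} as the blanket reference for the section; your argument (injectivity via $P\sim Q\Rightarrow C\cong\bbp^1$, the closed-immersion criterion via Riemann--Roch on $h^0(K_C-P)$, and non-degeneracy via the Albanese universal property) is exactly the standard one found there, so nothing further is needed.
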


We denote by $\sym^rC=C\times\dots\times C/\frS_r$ the symmetric product, i.e. the quotient of $C^r=C\times\dots\times C$ by $\frS_r$, the group of permutations on $r$ letters. This is a smooth projective variety of dimension $r$ (see, for example, \cite{mijv} Proposition 3.2). We can generalize the map $\varphi: C \rightarrow \mathrm{Jac}(C)$ as follows: the map from $C\times\dots\times C$ to $ \mathrm{Jac}(C)$ defined by $(x_1,\dots,x_r)\mapsto\varphi(x_1)+\dots+\varphi(x_r)$ is invariant under the action of  $\frS_r$ and thus induces a map $\varphi_r:\sym^rC\rightarrow\mathrm{Jac}(C)$ whose image is classically denoted by $W_r({C})$; further, when $r\leq g$, the map $\varphi_r:\sym^rC\rightarrow W_r({C})$ is birational (see \cite{mijv} Theorem 5.1). In particular $W_{g-1}=\Theta_C$ is the so-called theta divisor and the map  $\varphi_g:\sym^gC\rightarrow\mathrm{Jac}(C)$  is birational (this is the starting point of Weil's construction of the jacobian).

More precisely, if we identify points of $\sym^rC$ with effective divisors on $C$ with degree $r$, we have $\varphi(D_1)=\varphi(D_2)$ if and only the two divisors $D_1$ and $D_2$ are linearly equivalent.
All the constructions above can be generalized over a base $B$, which for convenience we will choose to be a curve. Thus we consider now a smooth projective surface $\mathcal{X}$ equipped with a fibration
$\pi : \mathcal{X}\rightarrow B$ whose fibres are curves of genus $g\geq 1$. We obtain thus a jacobian fibration $\tilde{\pi} : \mathcal{J}\rightarrow B$, first over the open subset $U$ above which the fibres $\mathcal{X}_b$ are smooth and then extended in some fashion over $B$.

The group $\frS_r$ acts on the fibre product $\mathcal{X}\times_B\dots\times_B\mathcal{X}$ and we denote the quotient:
$$\sym_B^r\cax:=\mathcal{X}\times_B\dots\times_B\mathcal{X}/\frS_r$$
We obtain a $B$-morphism birational onto its image
$$\varphi_r=\varphi_{r,B}: \sym_B^r\cax \rightarrow \mathcal{J}$$
\begin{lemma}\label{surf}
If $S\subset\sym_B^r\cax$ is a surface projecting onto $B$ and the generic fibre of $S$ over $B$ is a curve of genus $\geq 1$, then the image by $S':=\varphi_r(S)$ inside $\caj$ is again a surface. If further $S$ is $k$-unirational, then so is $S'$.
\end{lemma}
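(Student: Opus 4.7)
The plan has two components. The unirationality statement is the easier one: once $\varphi_r|_S\colon S\to S'$ is known to be a dominant $k$-morphism, any dominant rational map $\bbp^N\dashrightarrow S$ composes with it to give a dominant $\bbp^N\dashrightarrow S'$, so $S'$ inherits $k$-unirationality. The substance of the lemma is therefore to show $\dim S'=\dim S=2$, i.e.\ that $\varphi_r|_S$ is generically finite.

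My approach exploits that $\varphi_{r,B}$ is $B$-birational onto its image $W_r$, so it restricts to an isomorphism on a dense Zariski open $V\subset\sym_B^r\cax$. If $S\cap V\neq\emptyset$, then $S\cap V$ is dense in $S$ and maps isomorphically onto an open of $S'$, already yielding $\dim S'=2$. The only case I need to rule out is thus $S\subset E:=\sym_B^r\cax\setminus V$; passing to the generic fibre, this would force the curve $S_\eta/K$ to sit inside $E_\eta=\{D\in\sym^r\cax_\eta:\dim|D|\geq 1\}$. I then split on the behaviour of $\alpha:=\varphi_r|_{S_\eta}\colon S_\eta\to\jac(\cax_\eta)$: if $\alpha$ is non-constant its image is a curve in $\jac(\cax_\eta)$ and globalisation over $B$ again yields $\dim S'=2$, so the remaining case is $\alpha$ constant. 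In that case $S_\eta$ must sit in a single linear system $|D_\eta|\simeq\bbp^m\subset\sym^r\cax_\eta$, and the genus $\geq 1$ hypothesis forces $m\geq 2$, since a curve of positive genus does not embed in $\bbp^0$ or $\bbp^1$.

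To finish I invoke the universal property of the Jacobian of $S_\eta$: the morphism $\alpha$ factors as $S_\eta\hookrightarrow\jac(S_\eta)\xrightarrow{f}\jac(\cax_\eta)$ with the first arrow an Abel-Jacobi embedding, so $\alpha$ being constant is equivalent to $f=0$. The morphism $f$ is the one attached to the incidence correspondence $I_\eta=\{(s,P)\in S_\eta\times\cax_\eta:P\in D_s\}$, which has positive-degree projections to both factors. The step I expect to be the main obstacle is deducing $f\neq 0$ from this non-triviality: one must show that the class of $I_\eta$ in $\mathrm{NS}(S_\eta\times\cax_\eta)$ is not a sum of purely horizontal and vertical divisors. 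The standing assumption that $\caj$ has trivial $K/k$-trace is expected to be essential here, as it excludes the degenerate configurations in which all $D_s$ become linearly equivalent over $K$. Once $f\neq 0$ is established, the contradiction forces $\dim S'=2$, and the unirationality conclusion follows from the first paragraph.
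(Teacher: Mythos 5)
Your overall skeleton agrees with the paper's: pass to the generic fibre $S_\eta$, reduce everything to showing that $\alpha=\varphi_r|_{S_\eta}$ is non-constant, conclude $\dim S'=2$ from surjectivity onto $B$ plus a one-dimensional generic fibre, and get unirationality of $S'$ by composing a dominant rational map $\bbp^N\dashrightarrow S$ with $\varphi_r$. The paper disposes of the non-constancy in one line: the fibres of $\varphi_r$ are complete linear systems, and a curve of genus $\geq 1$ cannot be ``parametrized by the projective line''. Where you diverge is in your treatment of the constant case, and there your argument has a genuine gap. If $\alpha$ is constant, then by definition all the divisors $D_s$, $s\in S_\eta$, lie in a single complete linear system $|D_\eta|$, i.e.\ they are all linearly equivalent on $\cax_\eta$; consequently the map $f:\jac(S_\eta)\to\jac(\cax_\eta)$ induced by the incidence correspondence $I_\eta$, which sends $[s-s']$ to $[D_s-D_{s'}]$, is zero. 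So ``$\alpha$ constant'' and ``$f=0$'' are not independent facts to be played against each other: the first implies the second, and no contradiction can be extracted from the correspondence. In particular the implication you flag as the main obstacle --- that positive-degree projections of $I_\eta$ force $f\neq 0$ --- is false: for any curve $T$ inside a linear system $|D|\cong\bbp^m$, $m\geq 2$, on a fixed curve $X$, the incidence correspondence in $T\times X$ has positive degree over both factors yet induces the zero map on Jacobians, precisely because all members of $|D|$ are linearly equivalent. The trivial $K/k$-trace hypothesis cannot rescue this either: it concerns constant abelian subvarieties of $\jac(\cax_\eta)$ coming from $k$ and says nothing about whether $S_\eta$ can sit inside one linear system.

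You have, to be fair, isolated the one point where the paper is terse: a curve of genus $\geq 1$ can perfectly well embed in $\bbp^m$ for $m\geq 2$, so once $\dim|D_\eta|\geq 2$ is conceivable the bare genus hypothesis does not immediately finish the constant case. But the repair must come from the specific geometry of the divisors $D_s$ on $\cax_\eta$, not from the incidence correspondence, which carries no information beyond the constancy of $\alpha$ itself. For instance, in the paper's application $r=d\leq g$ and the $D_s$ are the fibres of a degree-$d$ map $\cax_\eta\to(\cax_0)_\eta$ with $(\cax_0)_\eta$ of genus $\geq 1$; constancy of $\alpha$ would force the composite of the Abel--Jacobi map of $(\cax_0)_\eta$ with the pullback on Jacobians to vanish, which is impossible since the pullback has finite kernel. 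As written, your proof does not close this case, so the argument is incomplete.
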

\begin{proof}
Since the generic fibre of the morphism $S\rightarrow B$ is a curve of genus at least one, the map $\varphi_r$ (restricted to this generic fibre) cannot be constant since this would say that the curve is parametrized by the projective line. Thus $S'=\varphi(S)$ surjects onto $B$ by hypothesis and that surjection has a generic fibre of dimension one, therefore $\dim(S')=2$.
\end{proof}

We now can prove Theorem \ref{theo2bis}.

\begin{proof} (of Theorem \ref{theo2bis})
The map $f:\cax\rightarrow \cax_0$ is by hypothesis a $B$-map with generic degree $d$ thus induces a
$B$-map $\cax_0\rightarrow \sym_B^d\cax$, whose image $S$ is $k$-unirational, since $\cax_0$ is itself assumed to be $k$-unirational. Applying Lemma \ref{surf} we see that the image of $S$ in $\caj$ is again  a $k$-unirational surface and applying (the proof of) Theorem \ref{theo1bis}, we conclude that there exists a base change by a $k$-rational curve $C$, say $\phi:C\rightarrow B$, which raises the rank.
\end{proof}

\section{examples}\label{examples}

We display in this section concrete examples of application of our results as well as examples falling slightly outside the  general setting. We will in particular make use of the two following geometric statements.

\begin{lemma}\label{lemw} (Weil) Let $\pi:\caa \rightarrow U$ be an abelian scheme over a smooth base $U$, defined over the field $k$. Any rational section of $\pi$ extends to a morphism $s: U\rightarrow \caa$. The group of morphic sections $\Gamma(U,\caa)$ is isomorphic to $A(k(U))$.
\end{lemma}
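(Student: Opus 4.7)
The plan is to prove this classical extension result of Weil in two steps, corresponding to codimension one and higher codimension indeterminacy. Starting from a rational section $s: V \dashrightarrow \caa$ defined on a dense open $V \subseteq U$, I would first show that the domain of definition already contains every codimension one point of $U$: at such a point $u$, the local ring $\mathcal{O}_{U,u}$ is a discrete valuation ring with fraction field $k(U)$, and $s$ restricts to a $k(U)$-point of the proper $\mathcal{O}_{U,u}$-scheme $\caa_{\mathcal{O}_{U,u}}$; this extends uniquely to an $\mathcal{O}_{U,u}$-point by the valuative criterion of properness. Gluing via separatedness of $\caa \to U$ produces an extension of $s$ to a morphism on an open $U' \subseteq U$ with $\mathrm{codim}_U(U \setminus U') \geq 2$.

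The second step extends across the remaining codimension two locus using Weil's rigidity principle for abelian varieties, namely the fact that they contain no rational curves. Pick $u \in U \setminus U'$ and a resolution of indeterminacy $\sigma: \tilde U \to U$ (for instance a composition of smooth blowups, by Hironaka) such that $\tilde s := s \circ \sigma$ is a morphism near $E := \sigma^{-1}(u)$. Since $E$ is rationally chain connected and $\tilde s(E) \subseteq \caa_u$ is contained in an abelian variety, the restriction $\tilde s|_E$ must be constant, so $\tilde s$ factors through $\sigma$ on a neighborhood of $u$, yielding a morphism extending $s$ on a Zariski open neighborhood of $u$. Applying this at every point of $U \setminus U'$ and gluing by separatedness produces a global morphism $\tilde s: U \to \caa$, which is automatically a section because $\pi \circ \tilde s$ and $\mathrm{id}_U$ agree on the dense open $V$ and both $U$ and $\caa$ are separated over $k$.

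For the identification $\Gamma(U,\caa) \cong A(k(U))$, restriction to the generic point of $U$ gives a natural group homomorphism $\Gamma(U,\caa) \to A(k(U))$, which is injective because two global sections of the separated scheme $\caa \to U$ coinciding generically must coincide everywhere, and surjective because every $k(U)$-point of $A$ furnishes a rational section that extends globally by the argument above; the group law is preserved since addition is performed fiberwise via the abelian $U$-group scheme structure on $\caa$. The main obstacle I foresee lies in the second step, which relies simultaneously on the rational chain connectedness of exceptional fibers of a smooth resolution and on the classical fact that every morphism from a rational curve to an abelian variety is constant; both are standard but deserve explicit reference for the argument to be airtight.
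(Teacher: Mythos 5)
Your proposal is correct in substance and proves the right statement, but its second half takes a genuinely different route from the paper. The first step is common to both: by the valuative criterion of properness and normality of the smooth base, the indeterminacy locus of a rational section has codimension at least two. For the remaining locus, the paper (whose proof is essentially a citation) invokes Weil's purity theorem --- the indeterminacy locus of a rational map from a smooth variety to a \emph{group} variety has pure codimension one --- so that a locus of codimension $\geq 2$ must be empty; the relative/group-scheme version is quoted from Proposition 1.3 of Artin's article on N\'eron models. You instead resolve the indeterminacy, contract the exceptional fibres using rational chain connectedness of the fibres of the resolution together with the absence of rational curves on the abelian fibre $\caa_u$, and then descend. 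Your route is self-contained but is specific to targets without rational curves, and it rests on two nontrivial inputs you rightly flag (elimination of indeterminacy by smooth blow-ups, and rational chain connectedness of the exceptional fibres, which for a composition of smooth blow-ups follows by induction); you should also name the descent step explicitly: since $\sigma$ is proper and birational between normal varieties, $\sigma_*\mathcal{O}_{\tilde U}=\mathcal{O}_U$, and the rigidity lemma converts constancy of $\tilde s$ on $\sigma^{-1}(u)$ into a factorization of $\tilde s$ through $\sigma$ over a neighbourhood of $u$. Weil's purity argument is softer, applies to arbitrary group schemes, and is what makes the paper's two-line proof possible; the final identification $\Gamma(U,\caa)\cong A(k(U))$ is handled identically in both approaches.
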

\begin{proof} The main point is that a rational map from a smooth variety to a projective variety  has a set of indeterminacy of codimension at least two, whereas, by a classical result of Weil,  a rational map from a smooth variety to a group variety has a set of indeterminacy of pure codimension one. The extension to group schemes is carried out for example in Proposition 1.3 of \cite{ar}.
\end{proof}

The next lemma gives the relation between the rank of the N\'eron-Severi group of a surface $S$, denoted $\ns(S)$ and the rank of the jacobian of the generic fibre of a fibration $\pi:S\rightarrow B$.

\begin{lemma}\label{shio3} (Shioda) Let $S$ be a surface defined over an algebraically closed field $\bar{k}$ and fibered over a curve $B$. Let $S_{\eta}$ be the curve over $k(B)$ which is the generic fibre of $\pi: S\rightarrow B$. Assume $\pi$ has a section $C_0\subset S$ and denote $\jac_{X_{\eta}}(k(B))$ the Mordell-Weil group of the Jacobian. We have an exact sequence
\begin{equation}
0\longrightarrow \mathcal{T}\longrightarrow \pic(S)\longrightarrow \jac_{X_{\eta}}(k(B))\longrightarrow 0,
\end{equation}
where $\mathcal{T}$ is the subgroup generated by the section $C_0$ and the components of fibres of $\pi$.
In particular when the $K/k$-trace is zero we have $J(K)\cong\ns(S)/\mathcal{T}$ and, denoting $\rho$ the rank of $\ns(S)$ and $r$ the rank of $J(K)$ and $m_v$ the number of components of the fibre $S_v$, we have the Shioda-Tate formula:
\begin{equation}\label{st}
\rho=r+2+\sum_{v\in B}(m_v-1).
\end{equation}
\end{lemma}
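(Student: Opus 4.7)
The plan is to run the classical Shioda-Tate argument: realize $J(k(B))$ as the quotient of $\pic(S)$ by the subgroup generated by the section $C_0$ and the vertical divisors, and then quotient further by $\pic^0(S)$ using that $\pic^0(S)$ maps trivially to $J(k(B))$ once the $K/k$-trace vanishes.

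For the exact sequence, I would use $C_0$ to fix the rational point $P_0\in X_{\eta}(k(B))$, which splits $\pic(X_{\eta})\cong J(k(B))\oplus\bbz\cdot[P_0]$, and define
\begin{equation*}
\varphi\colon \pic(S)\longrightarrow J(k(B)), \qquad \varphi(D) = [D|_{X_{\eta}}]-\deg(D|_{X_{\eta}})[P_0].
\end{equation*}
Vertical divisors and $C_0$ lie in $\ker\varphi$, giving $\mathcal{T}\subseteq\ker\varphi$. Surjectivity follows by representing $P\in J(k(B))$ by a divisor $D_{\eta}$ on $X_{\eta}$ and taking its Zariski closure $\widetilde{D}\subset S$, so that $\varphi(\widetilde{D}-\deg(D_{\eta})C_0)=P$; Lemma \ref{lemw} is what makes this extension procedure rigorous, by extending the rational section $B\dashrightarrow\caj$ associated with $P$ to a morphism. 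For the reverse inclusion $\ker\varphi\subseteq\mathcal{T}$, assume $\varphi(D)=0$ with $n=\deg(D|_{X_{\eta}})$; then $(D-nC_0)|_{X_{\eta}}=\divv(f_0)$ for some $f_0$ in $k(X_{\eta})=k(S)$, and $D-nC_0-\divv_S(f_0)$ is a Weil divisor on $S$ whose restriction to the generic fibre vanishes, hence is supported on fibres of $\pi$ and lies in $\mathcal{T}$.

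To reach $\ns(S)$ I would observe that $\pic^0(S)\subseteq\mathcal{T}$ under the trace-zero hypothesis: the restriction map $\pic^0(S)\rightarrow J(k(B))$ factors through the $K/k$-trace of $J$ (this is the universal property of Chow's trace applied to the morphism of abelian varieties over $k$ coming from the relative Picard scheme), which is assumed trivial. Quotienting the exact sequence by $\pic^0(S)$ gives $J(k(B))\cong\ns(S)/\overline{\mathcal{T}}$, with $\overline{\mathcal{T}}$ the image of $\mathcal{T}$. It then remains to compute $\rk\overline{\mathcal{T}}$; the candidate generators are $[C_0]$, the class $[F]$ of a smooth fibre, and, for each fibre $S_v$, any $m_v-1$ of its $m_v$ irreducible components. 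Linear independence follows from the intersection pairing: $C_0\cdot F=1$ and $F^2=0$ decouple $[C_0]$ and $[F]$ from the rest, components of distinct fibres are disjoint, and within a single fibre Zariski's lemma makes the intersection form on the components modulo $F$ negative definite of rank $m_v-1$. Summing yields the Shioda-Tate formula $\rho=r+2+\sum_v(m_v-1)$.

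The step I expect to be the main obstacle is the identification $\pic^0(S)\subseteq\ker\varphi$ under the trace-zero hypothesis; a fully rigorous treatment requires carefully invoking the Picard functor together with Chow's trace machinery, and the non-algebraically-closed base $k$ adds some mild bookkeeping. The remaining ingredients are either formal manipulation of the exact sequence or intersection-theoretic calculations on $S$.
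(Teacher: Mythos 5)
Your proof is correct and follows essentially the same route the paper sketches: you define the same map $D\mapsto \bigl(D-\deg(D)C_0\bigr)\big|_{X_{\eta}}$, determine its kernel, use the trace-zero hypothesis to absorb $\pic^0(S)$ into $\mathcal{T}$, and count the rank of the trivial lattice via Zariski's lemma, which is exactly what the paper's two-line sketch and its references (\cite{shio2}, \cite{hp}) carry out. The only quibble is that invoking Lemma \ref{lemw} for surjectivity is superfluous --- taking the Zariski closure of a degree-zero divisor on $X_{\eta}$ already does the job, given that the section $C_0$ guarantees $\pic^0(X_{\eta})(k(B))=J(k(B))$ --- but this does not affect correctness.
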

\begin{proof} (sketch and references) See \cite{shio2} or \cite{hp} or, for a more general setting, see \cite{hpw}. The main point is to define the map $\pic(S)\longrightarrow \jac_{X_{\eta}}(k(B))$ and determine its kernel. This map   associates to the class of a divisor $D\in\pic(S)$ the restriction to the generic fibre of the class of $D-\deg(D)C_0$, where $\deg(D)$ is the intersection number of $D$ with a fiber.
\end{proof}

\subsection{Unirational families of abelian varieties}

Several authors have recently showed that certain families of abelian varieties form geometrically unirational varieties, i.e., that are unirational over an algebraically closed field (see \cite{verra}, \cite{gp}). Since these varieties are defined over $\mathbb{Q}$, there exists a number field  $k$ such that they are unirational over $k$. Our results or more precisely  slight variations of our results  apply to these. 

For given $g$ we denote $\caa_g$ the coarse moduli space of prinicipally polarized abelian varieties of dimension $g$ and $\pi:\cax_g\rightarrow\caa_g$ the universal family, which is well defined over an open subset of $\caa_g$.
 We obtain, in particular, by a variation on Theorem \ref{theo1} applied to the unirational universal family of abelian varieties $\cax_4$ and $\cax_5$, of dimensions 4 and 5, respectively, the following result. 

\begin{proposition}\label{example1} For $g=4$ or $5$, there are a number field $k$ and a Zariski dense set $T\subset \caa_g(k)$ such that for all $t\in T$, we have $\rk \cax_t(k)\geq 1$.
\end{proposition}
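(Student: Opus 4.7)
The plan is to apply N\'eron's specialisation theorem (Theorem \ref{specialise}(1)) directly to the universal family $\pi:\cax_g\to\caa_g$, using the tautological section supplied by the unirational parametrization. By the results of Verra (for $g=4$) and of Gritsenko--Hulek--Sankaran (for $g=5$), the variety $\cax_g$ is geometrically unirational, hence unirational over some number field $k$; fix a dominant $k$-rational map $f:\bbp^M\dashrightarrow\cax_g$, with $M=\dim\cax_g=g(g+3)/2$, and write $\phi:=\pi\circ f:\bbp^M\dashrightarrow\caa_g$ for the composition, which is also dominant.

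I would then form the pullback family $\caa':=\cax_g\times_{\caa_g}\bbp^M\to\bbp^M$, an abelian fibration over the smooth variety $\bbp^M$ whose fibre above $x\in\bbp^M$ is $\cax_{\phi(x)}$. The map $f$ itself provides a canonical rational section $\sigma:\bbp^M\dashrightarrow\caa'$, $x\mapsto(f(x),x)$, and the key point is that this $\sigma$ is non-torsion. Viewed as an element of $\caa'(k(\bbp^M))$, under the embedding $K_0:=k(\caa_g)\hookrightarrow k(\bbp^M)$ coming from $\phi$, the section $\sigma$ pulls back to the tautological point of the generic fibre $A_0$ of $\cax_g\to\caa_g$, i.e.\ to the identity morphism $A_0\to A_0$ in $A_0(k(A_0))$; since $[n]:A_0\to A_0$ is a nonzero isogeny of degree $n^{2g}$ for every $n\geq 1$, this identity (and hence $\sigma$) is non-torsion.

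Theorem \ref{specialise}(1) then asserts that the set of $x\in\bbp^M(k)$ where the specialisation map $i_x:\caa'(k(\bbp^M))\to\caa'_x(k)$ fails to be injective is a thin subset of $\bbp^M(k)$; in particular, since $\sigma$ is non-torsion, the subset of $x\in\bbp^M(k)$ at which $\sigma(x)=f(x)\in\cax_{\phi(x)}(k)$ is a torsion point lies in this thin set. The complement is therefore Zariski dense in $\bbp^M$, and its image $T$ under the dominant map $\phi$ is Zariski dense in $\caa_g(k)$. For every $t\in T$ the fibre $\cax_t$ contains the non-torsion $k$-rational point $f(x)$ for any preimage $x\in\phi^{-1}(t)$ in the complement, so $\rk\cax_t(k)\geq 1$. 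The main subtlety, which is only mild, is that Theorem \ref{specialise}(1) has to be invoked over the higher-dimensional smooth base $\bbp^M$; this is indeed N\'eron's original formulation in \cite{ne}, and the statement in the excerpt imposes no one-dimensionality on $B$.
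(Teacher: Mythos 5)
Your argument is correct, and it reaches the conclusion by a genuinely different route at the one step that actually requires work, namely ruling out torsion. The paper does not use the tautological section over the whole parametrizing space: it slices the dominant map $\psi:\bbp^N\dashrightarrow\cax_g$ (your $\bbp^M$; note the paper reserves $M$ for $\dim\caa_g$) by a pencil of linear subspaces $L_t$ of dimension $\dim\caa_g=g(g+1)/2$, so that each $\psi(L_t)$ is a genuine multisection, generically finite over $\caa_g$, and then invokes ``arguments similar to those used to prove Theorem~\ref{theo1bis}'' --- i.e.\ the Kummer-theoretic and height machinery behind Lemma~\ref{prpimportante} --- to conclude that for most $t$ the induced section is non-torsion; N\'eron's theorem over the higher-dimensional base is applied at the end, exactly as in your write-up. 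Your observation that the section $x\mapsto(f(x),x)$ of the pulled-back family restricts, on the generic fibre, to the tautological point of $A_0(K_0(A_0))\hookrightarrow A_0(k(\bbp^M))$ --- non-torsion simply because $[n]$ is a surjective isogeny --- is a clean shortcut that is special to this situation: the unirational parametrization of the \emph{total space} tautologically dominates each fibre, so you need neither the pencil, nor Lemma~\ref{serre}, nor Proposition~\ref{kt}, nor Lemma~\ref{numclass}. The paper's heavier pencil argument is the one that fits the general template of Theorem~\ref{theo1bis} and would survive in settings with no tautological point. Two cosmetic remarks: the paper attributes the unirationality of $\cax_4$ and $\cax_5$ to Verra and to Gross--Popescu rather than to the authors you name; and when invoking N\'eron over $\bbp^M$ it is cleanest to apply injectivity just to the cyclic subgroup generated by $\sigma$ (or to note that the trace can be split off as in Section~\ref{kakatrace}), so that no finite generation of the full Mordell--Weil group over $k(\bbp^M)$ is needed. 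Neither point affects the validity of your proof.
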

The density can be made a bit more precise, namely $T$ can be taken as the image by a dominant morphism $Y\rightarrow\caa_g$ of the complement of a thin set of $Y(k)$.

\begin{proof} Let $\pi:\cax_g\rightarrow\caa_g$ denote the universal family of abelian varieties for $g=4$ or $5$, we have a dominant map $\psi:\bbp^N\rightarrow \cax_g$ where $N:=\dim \cax_g=g(g+3)/2$ and we can pick a pencil of linear subspaces $L_t\subset\bbp^N$ with dimension 
$M:=\dim \caa_g=g(g+1)/2$, such that the composite map $L_t\cdots\rightarrow \caa_g$ is dominant. In particular $\psi(L_t)$ can be viewed as a multisection of $\pi:\cax_g\rightarrow\caa_g$. Using arguments similar to those used to prove Theorem \ref{theo1bis}, one concludes that for most $t$ the multisection cannot be torsion. Applying N\'eron's theorem (Theorem \ref{specialise}, first item) instead of Silverman's specialisation theorem yields the proposition.
\end{proof}

\subsection{Shioda's family of hyperelliptic curves}\label{shioex}
Many examples of elliptic surfaces are displayed in \cite{sal2}. In order to consider examples of higher genus, we start with an example studied in \cite{shio2,shio3}.
 Let $X$ be the hyperelliptic curve of genus $g$ over $K=k(t)=k(\bbp^1)$ studied in \cite[Section 5.1]{shio2} given by the equation

\begin{equation}
y^2=x^{2g+1}+p_2x^{2g}+\dots+p_{2g}x+p_{2g+1}+t^2=p(x)+t^2
\end{equation} 
where $p_i$ belong to $k$.

We will assume that $p(x)$ is separable and split over $k$, i.e.  there exist distinct $e_i$'s in $k$ such hat $p(x)=(x-e_1)\dots(x-e_{2g+1})$, we will reprove that the  Mordell--Weil group 
 $\caj(K)$ of the Jacobian variety $\caj$ has rank $2g.$ 
Notice that the algebraic surface $\cax$ associated to $X$ is a rational surface with a unique reducible fibre at infinity.

We consider $K_0=k(u)$ and put $t^2=u$ and $K=K_0(t)=k(t)$ so that $K/K_0$ is a quadratic extension with Galois group generated by $\sigma$ (i.e. $\sigma(t)=-t$). We introduce now the curves
$$X_0/K_0: \quad y^2=p(x)+u\qquad{\rm and}\qquad X/K: \quad y^2=p(x)+t^2$$
and their respective Jacobians $J_0=\jac(X_0/K_0)$ and $J=\jac(X/K)$
so that $X=X_0\times_{K_0}K$ and $J=J_0\times_{K_0}K$. Both curves have a ``point at infinity" which we denote by $\infty$ and we choose the Jacobian embedding
$j:P\mapsto cl(({P})-(\infty))$.
The curve $X$ has the obvious points $P_i=(e_i,t)$ and $P'_i=(e_i,-t)$ for $i=1,\dots,d$.
One sees easily that
$$\divv(x-e_i)=(P_i)+(P'_i)-2(\infty)\;{\rm and}\;\divv(y-t)=(P_1)+\dots+(P_d)-d(\infty)$$
hence we have the relations 
\begin{equation}\label{relp} j(P'_i)=-j(P_i)\quad{\rm and}\quad j(P_1)+\dots+j(P_d)=0.
\end{equation}
\begin{proposition}\label{example2} The following statements hold:
\begin{enumerate}
\item The $K/k$-trace of $J$ is trivial. 
\item The group $J_0(K_0)$ has rank zero.
\item The group $J(K)$ has rank $2g$ and the points $j(P_1),\dots,j(P_{2g})$ generate a subgroup of finite index; in fact
$2g$ is equal to the rank of $J(\bar{k}(t))$. 
\item For infinitely many $t\in k$, the group $J_t(k)$ has rank at least $2g+1$.
\end{enumerate}
\end{proposition}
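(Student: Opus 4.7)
The plan is to treat the four items in an order that lets each build on the previous, starting with (3). Part (3) is essentially Shioda's computation in \cite{shio2,shio3}: since $\cax$ is a rational surface with a single reducible fibre at $t=\infty$, I compute $\rho(\cax)$ and the local contribution $\sum_v(m_v-1)$ and apply the Shioda--Tate formula (\ref{st}) of Lemma \ref{shio3} to get $\rk J(K)=\rho(\cax)-2-\sum_v(m_v-1)=2g$. For explicit generators I use the $2g{+}1$ points $j(P_i)$, which by (\ref{relp}) satisfy exactly the relation $\sum_i j(P_i)=0$; independence of $j(P_1),\dots,j(P_{2g})$ modulo torsion follows from the non-degeneracy of their Gram matrix of N\'eron--Tate heights, computable via Lemma \ref{ntint} and carried out in Shioda's papers. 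The same argument over $\bar k(t)$ shows the rank does not grow over the algebraic closure of $k$.

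Part (1) follows from the rationality of $\cax$: one has $q(\cax)=h^{1}(\cax,\cao_\cax)=0$, hence $\alb(\cax)=0$, and since $\alb(B)=0$ as well, the standard description of the $K/k$-trace of the jacobian of the generic fibre of $\pi:\cax\to B$ as (an isogeny factor of) a subquotient of $\alb(\cax)$ forces it to be zero. Equivalently, the Shioda--Tate exact sequence of Lemma \ref{shio3} leaves no room for a nontrivial constant part once $\rk J(K)$ has been matched with the full predicted value in (3).

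Part (2) is a Galois-descent argument. Write $\gal(K/K_0)=\langle\sigma\rangle$ with $\sigma(t)=-t$. Since $J=J_0\times_{K_0}K$, the inclusion $J_0(K_0)\hookrightarrow J(K)^{\sigma}$ is an equality modulo torsion. But $\sigma(P_i)=P'_i$, so by the relations (\ref{relp}) we get $\sigma\cdot j(P_i)=j(P'_i)=-j(P_i)$; since $j(P_1),\dots,j(P_{2g})$ span $J(K)\otimes\bbq$ by (3), the involution $\sigma$ acts as $-1$ on $J(K)\otimes\bbq$, so $J(K)^{\sigma}\otimes\bbq=0$ and (2) follows.

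Part (4), the main new content, is an application of Corollary \ref{principal}. Consider the relative jacobian embedding $j:\cax\to\caj$ (for $g=1$ this is an isomorphism and one applies Theorem \ref{theo1bis} directly since $\caj=\cax$ is already rational); for $g\geq 2$ set $S:=j(\cax)\subset \caj$. Then $S$ is a $k$-unirational surface, being birational to the rational surface $\cax$, and it is not contained in a fibre because $j$ is a $B$-morphism and $\cax\to B$ is surjective. Corollary \ref{principal} produces a $k$-rational curve $C$ and a finite map $\phi:C\to B$ with $\rk \caj_C(k(C))\geq \rk J(K)+1=2g+1$, and then Corollary \ref{cor2} (Silverman specialisation applied over $C$) yields infinitely many $t\in\phi(C(k))\subset\bbp^1(k)$ with $\rk \caj_t(k)\geq 2g+1$, as required.

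The computational heart of the argument is part (3)---specifically the identification of the reducible fibre at infinity and the verification of independence of the $j(P_i)$'s---but this is precisely what Shioda carries out in \cite{shio2,shio3}, so the essentially new content lies in (4), which reduces cleanly to the corollaries of Lemma \ref{prpimportante}.
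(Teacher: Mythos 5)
Your argument is correct, and items (1) and (2) coincide with the paper's own proof: the trace vanishes because the conic-bundle surface has trivial Albanese/Picard variety, and $J_0(K_0)$ has rank zero because $\sigma$ acts as $-1$ on the generators $j(P_i)$. The differences lie in (3) and (4). For (3), the paper does not compute $\rho(\cax)$ and the contribution $m_\infty$ of the fibre at infinity separately; it exploits the \emph{second} fibration of the same surface, as a conic bundle over the $x$-line with $d$ degenerate fibres, to evaluate the combination $\rho-m_\infty=d$ directly and deduce $r=2g$ from the Shioda--Tate formula over the $t$-line. For the independence of the $j(P_i)$ it does not use the N\'eron--Tate Gram matrix but specialises at $t_0=0$: the $P_i$ become Weierstrass points of $y^2=p(x)$, so $\Gamma$ surjects onto $(\bbz/2\bbz)^{2g}$, and since specialisation is injective on torsion and $p(x)+t^2$ is irreducible over $k(t)$ (so $J$ has no rational $2$-torsion), $\Gamma\cong\bbz^{2g}$. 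This is more self-contained than deferring the lattice computation to Shioda, though your citation is legitimate. For (4) you invoke Corollary \ref{principal} applied to $j(\cax)\subset\caj$, which is a valid and clean reduction to the general machinery; the paper instead argues by hand on the explicit conic pencil $y^2-t^2=p(a)$, using the element of $\gal(K_1/K_0)$ that fixes $\sqrt{p(a)+t^2}$ and sends $t\mapsto -t$ (acting as $+1$ on the new point and $-1$ on the old generators) to show any dependence forces $2mj(P_a)=0$, which fails for all but finitely many $a$. The explicit route buys effectivity: it exhibits the base-change curve as the conic $v^2=p(a)+t^2$ (so $C\cong\bbp^1_k$ is immediate) and identifies exactly which $a$ could fail, whereas your route is shorter but imports the Kummer-theoretic bound of Lemma \ref{prpimportante}. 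One small caveat: your side remark that the Shioda--Tate sequence ``leaves no room'' for a constant part once the rank matches is not by itself a proof that the trace vanishes, since formula (\ref{st}) is stated under the hypothesis that the trace is already zero; the Albanese argument is the one to keep.
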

\begin{proof} The surface $y^2=p(x)+t^2$ is covered by rational curves (conics) hence has trivial Albanese and Picard varieties. The $K/k$-trace of $J$ is a quotient of the Picard variety (Cf. \cite{hp,shio2,shio3}) and hence is trivial.

Let $\Gamma$ be the group generated by the $j(P_i)$'s and $j(P_i')$'s. We will check that (\ref{relp}) are the only relations between the points $j(P_i)$, $j(P'_i)$. 

The specialisation at $t_0=0$ sends $\Gamma$ onto the kernel of multiplication by two on $J_{t_0}$, the Jacobian of the curve $y^2=p(x)$, which is isomorphic to $(\bbz/2\bbz)^{2g}$. Since the specialisation is injective on torsion subgroups and $\Gamma$ is a quotient of $\bbz^{2g}$, we deduce that $\Gamma\cong \bbz^{2g-s}\times (\bbz/2\bbz)^{s}$. But one checks easily that non-trivial 2-torsion points on $J$ are not defined over $k(t)$, indeed the polynomial $p(x)+t^2$ is clearly irreducible in $k(t)[x]$.
 
To compute the geometric rank $r$, we introduce the minimal model $f:S\rightarrow\bbp^1$ of $X/K$. The singular fibres are at $t=\infty$ with, say, $m_{\infty}$ components and $2(d-1)$ singular, irreducible fibres corresponding to  $t=\pm\sqrt{-p(\alpha)}$ for $p'(\alpha)=0$, thus the so-called Shioda-Tate formula (Lemma \ref{st}) for $\rho$  the rank of $\ns(S)$ reads
\begin{equation}
\rho=r+m_{\infty}+1.
\end{equation}
Writing the (affine) equation as $y^2-t^2=p(x)$ we can view $S$ as a conic bundle with $d$ degenerate fibres meeting this affine part. Applying again Lemma \ref{st},  this is enough to compute
$\rho-m_{\infty}=d$ and hence $r=2g$.

To compute the rank of $J_0(K_0)$, recall $\sigma$ is the generator of the Galois group of $K/K_0$, i.e. the automorphism of $K$ fixing $K_0$ and sending $t$ to $-t$. The group $J_0(K_0)$ can be identified with the subgroup of $J(K)$ fixed by $\sigma$, but $\sigma(j(P_i))=j(P'_i)=-j(P_i)$ and therefore $\sigma$ acts as multiplication by $-1$ on $\Gamma$. This implies that $\sigma$ has no fixed points other than $0$ in $\Gamma$ and therefore the rank of  $J_0(K_0)$ is zero.

Thus we get an example where a base change $\bbp^1\rightarrow\bbp^1$ produces an augmentation $+2g$ of the rank. We can exploit the geometry to obtain an increase of $2g+1$ as follows. The surface $S$ has the structure of a conic bundle hence a pencil of rational curves $C_a$ with an affine equation $y^2-t^2=p(a)$, applying the base change technique to this pencil amounts to using the extension $K_1:=k(t,v)$ where $v^2=p(a)+t^2$, being careful to choose $a\not=e_i$; in other words $K_1$ is a biquadratic extension of $K_0$ which we may write $K_1=K_0(\sqrt{u},\sqrt{p(a)+u})$. We automatically get a new section $P_a=(a,v)$. Suppose that $j(P_a)$ is linearly dependent on the old sections, i.e. we have a relation  $mj(P_a)=\sum m_ij(P_i)$ with $m\geq 1$.
Consider the element $\sigma\in\gal(K_1/K_0)$ which fixes $v=\sqrt{p(a)+u}$ and verifies $\sigma(t)=-t$. We have $\sigma(j(P_a))=j(P_a)$ and $\sigma(j(P_i))=-j(P_i)$ and therefore $mj(P_a)=\sum m_ij(P_i)$ implies $2mj(P_a)=0$. There is only a finite number of values of $a$ for which $P_a$ can be torsion so, for almost all $a$ the rank of $J_1=J_0\times_{K_0}K_1$ over $K_1$ is $\geq 2g+1$.
Notice that $K_1$ is the function field of the curve defined by the equation   $v^2=p(a)+t^2$ which is a conic with a rational point, hence isomorphic to $\bbp^1$ over $k$. \end{proof}

\begin{remark} The case when $p(x)$ has even degree is slightly different. If say
$p(x)=(x-e_i)\dots (x-e_{2g+2})$ is separable over $k$ and $X/k(t)$ is given by the equation $y^2=p(x)+t^2$, we have again the points $P_i=(e_i,t)$ and $P_i'=(e_i,-t)$, but we have now two points at infinity $\infty_1$ and $\infty_2$ and 
$\divv(x-e_i)=(P_i)+(P'_i)-(\infty_1)-(\infty_2)$ and $\divv(y-t)=(P_1)+\dots+(P_{2g+2})-(g+1)((\infty_1)+(\infty_2))$
If we pick $\infty_2$ as origin, i.e. define $j(P)$ as th class of $(P)-(\infty_2)$,  we get the relations 
\begin{equation}\label{relpbis} j(P'_i)+j(P_i)=j(\infty_1)\quad{\rm and}\quad j(P_1)+\dots+j(P_{2g+2})=(g+1)j(\infty_1). 
\end{equation}
In general, i.e. when $p(x)$ is sufficiently generic these are the only relations and the $j(P_i)$'s generate a group isomorphic to $\bbz^{2g+2}$ and of finite index in
$J_X(k(t))$ or even in $J_X(\bar{k}(t))$. But there are special cases, e.g. when $p(x)=x^{2g+2}+a$, since then $\divv(y-x^n)=(g+1)(\infty_1)-(g+1)(\infty_2)$, so $j(\infty_1)$ is torsion in this case.
\end{remark}

\subsection{A family with non-trivial trace} Our next example is a variant and covering of Shioda's family of hyperelliptic curves.
Let $p(x)$, $q(x)$ be  polynomials in $k[x]$ of degrees $d_1=2g_1+1$ and $d_2=2g_2+2$ respectively; we will further assume that $p(x)q(x)$ is separable and that $p(x)=\prod_i(x-e_i)$ with $q(e_i)=a_i^2$, where $e_i,a_i$ belong to $k$; we write $q(x)=q_0(x-f_1)\dots(x-f_{d_2})$ but do not assume that the $f_j$'s belong to $k$. We let again $K=k(t)$ and consider the curve $X/K$ with   affine equations in $\bba^3$ given by
\begin{equation}\label{bicurve}\left\{\begin{matrix} y^2=p(x)+t^2\cr z^2=q(x)\cr\end{matrix}\right.
\end{equation}
The curve $X$ has two points at infinity and its genus is $d_1+d_2-2$.
If $X_1$ is the curve considered in the previous example with affine equation $y^2=p(x)+t^2$ and $X_2$ the curve with affine equation $z^2=q(x)$ and finally
$X_3$ the curve with affine equation $w^2=(p(x)+t^2)q(x)$, one checks easily that $J_X$, the Jacobian of $X$, is isogenous to the product of the Jacobians of $X_1$, $X_2$ and $X_3$. Clearly $J_{X_2}$ descends to $k$; we have seen that the $K/k$-trace of $J_{X_1}$ is trivial; since the surface $w^2=(p(x)+t^2)q(x)$ is covered by conics, the $K/k$-trace of $J_{X_3}$ is also trivial and finally the $K/k$-trace of $J_X$ is isogenous to $J_{X_2}$.
By the previous example, the rank of $J_{X_1}(K)$ is $d_1-1=2g_1$. 

We introduce now the following points on $X_3$ as $Q_j=(f_j,0)$ for $q(f_j)=0$ and $R_i=(e_i,a_it)$, $R_i'=(e_i,-a_it)$. Let us check that they generate a group isomorphic to $\bbz^{d_1}\times(\bbz/2\bbz)^{d_2}$. Let $\Gamma$ be the subgroup of $J_{X_3}(\bar{k}(t))$ generated by
the $j(Q_j)$'s and $j(R_i)$'s. We first notice that, on $X_3$, we have $\divv(x-f_j)=2(Q_j)-2(\infty)$ and $\divv(x-e_i)=(R_i)+(R'_i)-2(\infty)$ so $2j(Q_j)=0$ and $j(R_i)+j(R'_i)=0$. Thus the $j(Q_j)$ generate a subgroup $(\bbz/2\bbz)^{d_2}$.
Next we show that the points $j(R_i)$ are independent: if $\Gamma_0$ is the subgroup generated by the $j(R_i)$'s, we have $\Gamma_0\cong\bbz^{d_1}$. For this we observe again that, via specialisation at $t_0=0$, the points $R_i$ specialise to Weierstrass points and therefore $j(R_i)$ specialises to a point of order two; in fact, if we denote $j_0(P)$ the specialisation of $j(P)$, we see that the $j_0(Q_j)$ and $j_0(R_i)$ generate $J_{t_0}[2]\cong(\bbz/2\bbz)^{d_2+d_1-1}$, with the unique relation among them being $\sum_jj_0(Q_j)+\sum_ij_0(R_i)=0$. We see therefore that  $\Gamma_0$ maps onto a group isomorphic to $(\bbz/2\bbz)^{d_1}$.
Since specialisation is injective on torsion, we see that $\Gamma_0\cong \bbz^{d_1-s}\times(\bbz/2\bbz)^{s}$. But, since $p(x)+t^2$ is irreducible in $K[x]$, we see that
$J[2](K)$ is the subgroup generated by the $j(Q_j)$ and thus $s=0$.

We conclude that:
$$\Gamma:=<\dots R_i, \dots, Q_g,\dots>\cong  (\bbz/2\bbz)^{d_2}\times\bbz^{d_1}$$
To compute the rank of $J_{X_3}(K)$ we rewrite the curve $X_3$ as $q(x)v^2=p(x)+t^2$, which we obtain by putting $(v,x,t)=(w/q(x),x,t)$. We apply again Lemma \ref{st} as follows: the surface can be viewed as a conic bundle over the $x$-line that has $d_1+d_2+1$ singular fibres, corresponding to the zeroes 
of $q(x)p(x)$ and the point at infinity. Therefore it's Picard rank is $\rho=2+(d_1+d_2+1)$. Comparing with Shioda-Tate formula applied to the surface fibered over the $t$-line we obtain that
$\rk J_{X_3}(\bar{k}(t))=d_1$. Summing up,  we have shown that, denoting $\tau:A_0\hookrightarrow J_X$ the trace:

\begin{proposition}\label{example3} Let $X/K$ be the curve given by equations (\ref{bicurve}).
\begin{enumerate}
\item The $K/k$-trace of $J_X$, denoted $A_0$, is isogenous to $J_{X_2}$, in particular it has dimension $g_2=\frac{d_2}{2}-1$.
\item The generic rank is given by:
$$
\rk J_X(K)/\tau A_0(k)=\rk J_{X_1}(K)+\rk J_{X_3}(K)=2d_1-1=4g_1+1.
$$
We also have $\rk J_X(K)=\rk J_{X_2}(k)+2d_1-1$.  
\item For almost all $a\in k$, the base change to $K_1=K(\sqrt{p(a)+t^2})$ raises the generic rank  at least by one.
\end{enumerate}
\end{proposition}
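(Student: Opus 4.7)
The plan is to assemble the isogeny decomposition $J_X \sim J_{X_1} \times J_{X_2} \times J_{X_3}$ sketched before the statement, and to reduce each of the three claims to the corresponding statement on the factors. For (1), I would verify the isogeny via the quotient maps $(x,y,z)\mapsto(x,y)$, $(x,y,z)\mapsto(x,z)$ and $(x,y,z)\mapsto(x,yz)$ from $X$ onto $X_1, X_2, X_3$. Then $J_{X_2}$ is defined over $k$ and therefore sits inside the trace; Proposition~\ref{example2}(1) gives triviality of the trace of $J_{X_1}$; and the smooth model of $w^2 = (p(x)+t^2)q(x)$ is a conic bundle over the $x$-line (fibre at $x=x_0$ is a conic in $(w,t)$), hence has trivial Picard variety, so Lemma~\ref{shio3} yields triviality of the trace of $J_{X_3}$. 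Consequently $A_0 \sim J_{X_2}$, of dimension $g_2 = d_2/2 - 1$.

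For (2), the isogeny and the vanishing of the traces of $J_{X_1}$ and $J_{X_3}$ yield
$$\rk J_X(K)/\tau A_0(k) \;=\; \rk J_{X_1}(K) + \rk J_{X_3}(K).$$
Proposition~\ref{example2}(3) supplies $\rk J_{X_1}(K) = 2g_1$. For $J_{X_3}$, the $K$-rational points $R_i = (e_i, a_i t)$ specialise at $t=0$ to Weierstrass classes of $\jac(w^2 = p(x)q(x))$; combining injectivity of specialisation on torsion with the irreducibility of $p(x)+t^2$ in $K[x]$ (which forbids new $K$-rational $2$-torsion beyond $\langle j(Q_j)\rangle$) gives $\langle j(R_i)\rangle \cong \bbz^{d_1}$ in $J_{X_3}(K)$. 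The Shioda--Tate formula applied to the conic bundle model $q(x)v^2 = p(x)+t^2$ over $\bbp^1_x$, which has $d_1+d_2+1$ singular fibres, matches this with the upper bound $\rk J_{X_3}(\ov{k}(t)) = d_1$. Summing gives $4g_1 + 1$; and since $J_{X_2}$ is constant, $J_{X_2}(K) = J_{X_2}(k)$ and hence $\rk J_X(K) = \rk J_{X_2}(k) + 4g_1 + 1$.

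For (3), the same isogeny/trace analysis applied over $K_1$ gives
$$\rk J_X(K_1)/\tau A_0(k) \;=\; \rk J_{X_1}(K_1) + \rk J_{X_3}(K_1).$$
Base change does not decrease rank, so $\rk J_{X_3}(K_1) \geq 2g_1 + 1$. For the $J_{X_1}$-factor I invoke the argument in the proof of Proposition~\ref{example2}(4): the Galois element $\sigma \in \gal(K_1/K_0)$ that fixes $v = \sqrt{p(a)+t^2}$ and reverses $t$ acts by $+1$ on $j(P_a)$ and by $-1$ on the generators $j(P_i)$ of $J_{X_1}(K)$, so any relation $m\,j(P_a) = \sum m_i j(P_i)$ collapses to $2m\,j(P_a) = 0$, forcing torsion --- which occurs for only finitely many $a$. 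Hence $\rk J_{X_1}(K_1) \geq 2g_1 + 1$ for almost all $a$, and $\rk J_X(K_1)/\tau A_0(k) \geq 4g_1 + 2$, an increase of at least one over the generic rank.

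The main obstacle is to ensure that the rank increase contributes to the Lang--N\'eron quotient rather than being absorbed by the non-trivial trace $A_0 \sim J_{X_2}$; the isogeny decomposition handles this automatically, since the increment arises inside the trace-zero factor $J_{X_1}$ and the Galois involution $\sigma$ fixes $K_0$ and therefore fixes $\tau A_0(k)$ pointwise.
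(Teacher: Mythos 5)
Your proposal is correct and follows essentially the same route as the paper: the isogeny decomposition $J_X\sim J_{X_1}\times J_{X_2}\times J_{X_3}$ via the three intermediate quotients of the $(\bbz/2\bbz)^2$-cover, identification of the trace with $J_{X_2}$ because the surfaces attached to $X_1$ and $X_3$ are conic bundles with trivial Picard variety, the rank count for $J_{X_3}$ via the points $R_i$ (specialisation at $t=0$ plus irreducibility of $p(x)+t^2$) matched against the Shioda--Tate bound $d_1$ from the conic-bundle model, and the Galois involution $\sigma$ fixing $\sqrt{p(a)+t^2}$ and negating $t$ to show the new section on the $J_{X_1}$-factor is independent for almost all $a$. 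The only (harmless) differences are that you spell out the isogeny through explicit quotient maps and make item (3) explicit, where the paper simply refers back to the argument of Proposition \ref{example2}(4).
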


\medskip

\subsection{An example of double base change raising the rank by two.}
We examine the previous curve $X$ over $K=k(t)$. We have seen that for almost all $a\in k$,  over $K_1=K(\sqrt{p(a)+t^2})$, the rank of the Mordell-Weil group of $J_{X_1}$ is at least $\rk J_{X_1}(K)+1$. Similarly for almost all $b\in k$,  over $K_2=K(\sqrt{q(b)(p(b)+t^2)})$, the rank of the Mordell-Weil group of $J_{X_3}$ is at least $\rk J_{X_3}(K)+1$.  Hence we see that over $K_3=K_1K_2=K(\sqrt{p(a)+t^2},\sqrt{q(b)(p(b)+t^2)})$, the rank of the Mordell-Weil group of $J_{X}$ is at least $\rk J_{X}(K)+2$. However it is unclear if we can ensure that the curve $C$ such that $K_3=k(C)$ satisfies that $C(k)$ is infinite. Indeed we will prove this only under the restrictive conditions stated in the next lemma.

\begin{lemma}\label{tricky} Let $C_{a,b}$ be the curve defined by equations $r^2=p(a)+t^2$ and $s^2=q(b)(p(b)+t^2)$. Then we have:
\begin{enumerate}
\item For almost all $a,b\in k$, the curve $C_{a,b}$ has genus one.
\item Suppose further that:
\begin{enumerate}
\item The polynomial $p(t)q(t)$ is separable,  $\deg(p)=3$ and $\deg(q)=4$.
\item The polynomial $p$ has at least one $k$-rational root, say $e_1$ and $q(e_1)=a_1^2$ with $a_1\in k$.
\end{enumerate}
  Then for infinitely many values of $(a,b)\in k^2$, the set $C_{a,b}(k)$ is infinite.\
\end{enumerate}
\end{lemma}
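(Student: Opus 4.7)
For part (1), I would apply Riemann--Hurwitz to the $(\bbz/2\bbz)^2$-Galois cover $C_{a,b}\rightarrow\bbp^1_t$ obtained by adjoining $\sqrt{p(a)+t^2}$ and $\sqrt{q(b)(p(b)+t^2)}$. Its branch locus on $\bbp^1_t$ is $\{\pm\sqrt{-p(a)},\pm\sqrt{-p(b)}\}$, four pairwise distinct points whenever $p(a)p(b)(p(a)-p(b))\neq 0$. Each branch point has two preimages of ramification index $2$ in $C_{a,b}$, so $2g-2 = 4(-2)+4\cdot 2 = 0$ gives $g = 1$.

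For part (2), my plan is to construct a non-torsion section on a one-parameter subfamily of $\{C_{a,b}\}$ and then invoke Silverman's specialisation theorem. Fix $\mu\in k^*$ with $\lambda:=\mu^2\notin\{0,1\}$ and consider the line $\Gamma_\lambda\subset\bba^2_{a,b}$ defined by $b = e_1+\lambda(a-e_1)$. Pulling the family back along $\Gamma_\lambda\cong\bba^1_a$ produces an elliptic surface $\cae_\lambda$ whose generic fibre is $C_{a,b(a)}$. By hypothesis (b), at $a = e_1$ both defining equations degenerate and the fibre $C_{e_1,e_1}$ is the union of the four $k$-rational lines $\{r = \epsilon t,\, s = \delta a_1 t\}$ for $\epsilon,\delta\in\{\pm 1\}$, concurrent at $(0,0,0)$. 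Under the change of variables $u = r-t,\,v = r+t,\,p = s-a_1 t,\,q = s+a_1 t$ (with constraint $a_1(v-u)=q-p$), the defining equations of $\cae_\lambda$ reduce modulo higher order to $uv = p'(e_1)(a-e_1)$ and $pq = \lambda a_1^2\, uv$, whose tangent cone at the origin is the non-degenerate quadric $p^2+a_1 p(v-u)-\lambda a_1^2 uv = 0$ of determinant $\lambda(1-\lambda)a_1^4/4$. Hence $\cae_\lambda$ has an ordinary double point there, and its minimal resolution converts the fibre at $a = e_1$ into a Kodaira $I_0^*$ configuration with five components; the choice $\lambda = \mu^2$ produces the rational point $(u,v,p) = (1,1,a_1\mu)$ on the exceptional conic, so all five components are defined over $k$.

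Next I would construct a rational section $\sigma:\bbp^1_a\rightarrow\cae_\lambda$ meeting one of the four ``horn'' components rather than the central identity component. Starting from a $k$-point $(t_0,t_0,a_1 t_0)$ on the horn $\{r = t,\,s = a_1 t\}$ and writing $r = t_0+\rho\epsilon+O(\epsilon^2)$, $s = a_1 t_0+\tau\epsilon+O(\epsilon^2)$ (with $t = t_0$ at leading order and $\epsilon = a-e_1$), the linear-order analysis yields the consistent pair $\rho = p'(e_1)/(2t_0)$ and $\tau = \lambda(a_1^2 p'(e_1)+q'(e_1)t_0^2)/(2a_1 t_0)$. A Hensel-type argument based on the smoothness of the resolved surface near the chosen horn should promote this formal deformation to an algebraic section over $k(\Gamma_\lambda)$.

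The hard part will be to verify that $\sigma$ is of infinite order in $\cae_\lambda(k(\Gamma_\lambda))$. By construction the image of $\sigma$ in the component group $(\bbz/2\bbz)^2$ of the $I_0^*$ fibre at $a = e_1$ is non-trivial, which already rules out torsion supported on the identity component. A complete proof would proceed either by (i) combining the Shioda--Tate formula (Lemma \ref{shio3}) with a census of the remaining singular fibres of $\cae_\lambda$---those coming from the other zeros of $p(b(a))$, from the roots of $p(a) = p(b(a))$, and from $q(b(a)) = 0$, each classified via the quadratic-twist structure of $C_{a,b(a)}$ by $q(b(a))$---so as to bound the torsion subgroup; or (ii) specialising $\sigma$ at one convenient $a_0\in k$ and verifying non-torsion of $\sigma(a_0)$ on that single elliptic curve by a direct $2$-descent, whence non-torsion of $\sigma$ on the generic fibre follows. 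Once $\sigma$ is shown to be of infinite order, Silverman's specialisation theorem (Theorem \ref{specialise}) applied to $\cae_\lambda\rightarrow\bbp^1_a$ gives that $\sigma(a_0)$ has infinite order in $C_{a_0,b(a_0)}(k)$ for all but finitely many $a_0\in k$, yielding the desired infinite family of pairs $(a,b)\in k^2$ with $C_{a,b}(k)$ infinite.
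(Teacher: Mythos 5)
Your part (1) is correct and essentially equivalent to the paper's (which simply invokes the genus of a smooth intersection of two quadrics in $\bbp^3$). Part (2), however, has two genuine gaps, both located at the heart of the matter. First, the section $\sigma$ is never actually constructed: the linear-order/Hensel analysis at $a=e_1$ produces at best a point of the fibre over the completion $k((a-e_1))$, i.e.\ a formal local section near one point of the base, and such a point does not descend to a $k(a)$-rational point of the generic fibre $C_{a,b(a)}$. Producing a $k(a)$-point on this genus-one curve is precisely the difficulty being swept under the rug (note that even the points at infinity $(r:s:t:w)=(\pm1:\pm\sqrt{q(b(a))}:1:0)$ are irrational for generic $\lambda$, since $q(b(a))$ is generically a non-square quartic in $a$), so the existence of any section of $\cae_\lambda\rightarrow\bbp^1_a$ over $k$ is unproved. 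Second, even granting $\sigma$, the non-torsion verification is explicitly deferred: meeting a non-identity component of an $I_0^*$ fibre does not preclude torsion (two-torsion sections routinely meet such components), and neither of your alternatives (i) and (ii) is carried out. As it stands the proposal is an outline of a possible strategy, not a proof.

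For comparison, the paper's proof avoids producing sections over $k(a)$ altogether. It works with the threefold $X\subset\bba^5$ cut out by the two equations, fibres it over the surface $Y_1:\ s^2=q(b)(p(b)+t^2)$ with fibres the elliptic curves $r^2=p(a)+t^2$ carrying the visible non-torsion point $(a,r)=(e_1,t)$, and uses hypothesis (b) to show that $Y_1$ is birational to a conic bundle over the $b$-line having a split fibre at $b=e_1$ (namely $a_1^2s^2-t^2=0$), hence, after contracting one component, only $7$ singular fibres; Koll\'ar--Mella then gives $k$-unirationality of this surface, whence Zariski density of $X(k)$ and the conclusion. This is where condition (b) genuinely enters; in your argument it is used only to rationalize components of a degenerate fibre inside the unfinished construction. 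If you wish to salvage your route, note that option (ii) of your outline (a $2$-descent at one specialisation) would still require the global section to exist in the first place, which is the first gap above.
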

\begin{proof}
The first part is the standard fact that a smooth intersection of two quadrics in $\mathbb{P}^3$
is  a curve of genus 1. 
For the second part, we consider the threefold $X$ defined in $\bba^5$ by the equations $r^2=p(a)+t^2$ and $s^2=q(b)(p(b)+t^2)$. The map $(r,s,t, a,b)\mapsto (s,t,b)$ sends $X$ onto the surface $Y_1$ defined in $\bba^3$ by $s^2=q(b)(p(b)+t^2)$. The fibres of the map $\phi$ from $X$ to $Y_1$  are elliptic curves; indeed if $(s,t,b)\in Y_1$ then $\phi^{-1}\{(s,t,b)\}:=X_{(s,t,b)}$ is the elliptic curve $E_t$ given by $r^2=p(a)+t^2$. The elliptic curve $E_t$, viewed over $k(t)$ has positive rank (the point $P_1=(e_i,t)$ is of infinite order, as we have seen in subsection \ref{shioex}) hence for almost all $t\in k$ the rank of $E_t(k)$ is also positive. The surface $Y_1$ is clearly birational to the surface $Y$ defined in $\bba^3$ by $q(b)s^2=p(b)+t^2$; the latter is an elliptic conic bundle of the type studied by Kollar and Mella \cite{kollar}, whose results imply that $Y$ is $k$-unirational. Indeed the surface is a conic bundle with 8 singular fibres (corresponding to $b$ equal to a root of $pq$ or to the point at infinity); but the fibre at $b=e_1$ has equation $a_1^2s^2-t^2=0$ and is clearly reducible; as explained in \cite{kollar}, the contraction of one of the two $k$-rational components of the fibre yields a conic bundle with 7 singular fibres, which is therefore $k$-unirational by the main result of  \cite{kollar}. The same is true for $Y_1$ and $X$ has therefore a Zariski dense subset of $k$-rational points. Consider now $f$ the fibration of $X$ defined by $(r,s,t, a,b)\mapsto (a,b)$; the fibres of $f$ are just the curves of genus 1 denoted $C_{a,b}$ in the lemma and it follows that, for infinitely many $(a,b)$ (actually a Zariski-dense subset)  the set $C_{a,b}(k)$ is infinite. 
\end{proof}

Combining this lemma with the previous computations we get an example where our base change technique provides infinitely many fibres of rank at least the generic rank plus 2.

\begin{proposition}\label{example4} Let $X$ be  the curve  over $k(t)$ defined by the pair of equations
$$y^2=p(x)+t^2,\quad {\rm and}\quad z^2=q(x).$$
Suppose $\deg p=3$ and $\deg q=4$ and $p(x)q(x)$ separable.  Assume $p(x)$ has a $k$-rational root $e_1$ with  $q(e_1)=a_1^2$ with $a_1\in k$, then
\begin{enumerate}
\item The $k(t)/k$-trace of $J_X$ is an elliptic curve $E_0$  isogenous to the curve with equation $z^2=q(x)$.
\item There exist infinitely many values $t\in k$ such that
$$\rk J_{X_t}(k)\geq \rk J_X(k(B))+2.$$
\end{enumerate}
\end{proposition}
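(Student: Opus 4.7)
The plan is to assemble the proposition from three ingredients already developed in this paper: the isogeny decomposition of $J_X$ from Proposition \ref{example3}, the rank-raising base change of Lemma \ref{prpimportante} applied twice, and the crucial geometric input of Lemma \ref{tricky} that furnishes infinitely many rational points on the covering curve. Part (1) is immediate from Proposition \ref{example3}(1), since $\deg q = 4$ forces the curve $X_2: z^2 = q(x)$ to have genus one, so $J_{X_2}$ is already an elliptic curve; this is the promised $E_0$.

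For part (2), I would start from the $K$-isogeny $J_X \sim J_{X_1} \times J_{X_2} \times J_{X_3}$ recalled in Proposition \ref{example3}. The proof of Proposition \ref{example3}(3) shows that for almost all $a \in k$, passing to $K_1 = K(\sqrt{p(a)+t^2})$ introduces a new independent section on $J_{X_1}$, raising its rank by one. By an entirely parallel argument applied to the conic-bundle structure on $X_3$ written as $q(x)v^2 = p(x) + t^2$, for almost all $b \in k$ the extension $K_2 = K(\sqrt{q(b)(p(b)+t^2)})$ raises the rank of $J_{X_3}$ by one via a new section $(b, \cdot)$. Since the two radicals are algebraically independent over $K$, the compositum $K_3 = K_1 K_2$ is a biquadratic extension in which the two new sections remain independent from each other and from the old sections; this can be checked by the same Galois-action argument that concluded subsection \ref{shioex}, using the element of $\gal(K_3/K)$ that fixes one radical and flips the other. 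Hence $\rk J_X(K_3) \geq \rk J_X(K) + 2$.

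To convert this generic statement into a statement about specializations $t \in k$, observe that $K_3$ is the function field of the curve $C_{a,b}$ cut out in $\bba^3$ by $r^2 = p(a)+t^2$ and $s^2 = q(b)(p(b)+t^2)$. This is exactly the curve of Lemma \ref{tricky}, which under the restrictive hypotheses $\deg p = 3$, $\deg q = 4$, and $q(e_1) = a_1^2$ guarantees that for a Zariski-dense set of pairs $(a,b) \in k^2$ the set $C_{a,b}(k)$ is infinite. Picking such a pair, avoiding the finitely many exceptional $a, b$ for which the rank bump fails, and applying Silverman's specialization theorem to the pullback family $\caj_{X_{C_{a,b}}} \to C_{a,b}$, I conclude that for all but finitely many $u \in C_{a,b}(k)$ the fiber rank satisfies $\rk J_{X_{\tau(u)}}(k) \geq \rk J_X(K) + 2$, where $\tau: C_{a,b} \to \bbp^1_t$ is the natural projection. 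Since $C_{a,b}(k)$ is infinite and $\tau$ has finite fibres, this yields infinitely many such $t \in k$.

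The main obstacle is not the rank inequality itself, which is mechanical once Lemma \ref{prpimportante} is in hand, but rather the step of descending from $K_3$-points to $k$-points of the fibers. This requires $C_{a,b}$ to carry infinitely many rational points, which is a genuinely delicate geometric question because $C_{a,b}$ is an intersection of two quadrics in $\bbp^3$ and thus generically a genus-one curve with no obvious rational structure. The degree restrictions $\deg p = 3$, $\deg q = 4$ together with the splitting condition $q(e_1) = a_1^2$ are precisely what is needed so that the auxiliary surface $q(b) s^2 = p(b) + t^2$ has a reducible singular fibre over a $k$-point, reducing it to a conic bundle with seven singular fibres to which the Kollár–Mella unirationality criterion applies; this is where the restrictive hypotheses of the proposition are indispensable.
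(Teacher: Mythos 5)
Your proposal is correct and follows the paper's own route essentially verbatim: the isogeny decomposition $J_X\sim J_{X_1}\times J_{X_2}\times J_{X_3}$ from Proposition \ref{example3}, the two quadratic base changes $K_1$ and $K_2$ composed into $K_3=k(C_{a,b})$, Lemma \ref{tricky} (via Koll\'ar--Mella) to get infinitely many points on $C_{a,b}$, and Silverman's specialisation theorem to descend to fibres. The only superfluous step is your Galois-action check for the independence of the two new sections, which is automatic here because they live in the distinct isogeny factors $J_{X_1}$ and $J_{X_3}$ and rank can only increase under field extension.
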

Notice that if
$p(x)=(x-e_1)(x-e_2)(x-e_3)$ and $q(e_i)=a_i^2$ with $a_i,e_i\in k$, then the rank of $J_X(k(t))/\tau(E_0)(k)=2+\rk J_{X_3}(k(t))=5$.

\end{document}